\documentclass[11pt]{article}
\usepackage[utf8]{inputenc}

\usepackage{amsmath}
\usepackage{amsthm}
\usepackage{mathtools}
\usepackage{amsfonts}
\usepackage{amssymb}
\usepackage{comment}
\usepackage{blkarray}
\usepackage{thmtools} 

\usepackage{lineno}

\usepackage[normalem]{ulem}

\usepackage[margin=3cm]{geometry}

\usepackage[hidelinks]{hyperref} 
\usepackage[capitalize, nameinlink]{cleveref}

\usepackage{tikz} 
\usepackage{tikz-cd}
\usetikzlibrary{ 
	calc,%
	arrows,%
	shapes,
	shapes.geometric,
    positioning,
    fit,
} 

\tikzcdset{arrow style=tikz, diagrams={>=stealth}}

\usepackage[labelsep=quad,indention=10pt]{subfig}
\captionsetup*[subfigure]{position=bottom}
\usepackage{graphicx}

\definecolor{Bnavy}{RGB}{0, 66, 128}
\definecolor{Bdust}{RGB}{140,179,217}
\definecolor{Bsugarpaper}{RGB}{198, 217, 236}

\usepackage{faktor}


\theoremstyle{plain}
\newtheorem{theorem}{Theorem}[section]

\newtheorem{proposition}[theorem]{Proposition}

\theoremstyle{definition}
\newtheorem{definition}[theorem]{Definition}

\newtheorem{point}[theorem]{}

\counterwithin{subsection}{section}
\usepackage[linesnumbered,vlined, ruled]{algorithm2e}
\usepackage{algpseudocode}

\usepackage{float}
\SetKwInput{KwIn}{I}
\SetKwInput{KwOut}{O}

\newenvironment{method}[1][htb]
{
	\begin{algorithm}[#1]%
	}{\end{algorithm}}

\DeclareMathOperator{\Img}{im}
\DeclareMathOperator*{\argmin}{arg\,min}
\DeclareMathOperator*{\argmax}{arg\,max}
\DeclareMathOperator{\dg}{deg}
\DeclareMathOperator{\ent}{ent}
\newcommand{\intS}[2][n]{I^{#1}[#2)}
\newcommand{\posr}{\left[0,\infty\right)}
\newcommand{\D}[1][n]{D^{#1}}
\newcommand{\Sp}[1][n]{S^{#1}}

\newcommand{\define}[1]{{\bf #1}}

\bibliographystyle{plainurl}

\title{Decomposing filtered chain complexes: geometry behind barcoding algorithms}

\author{Wojciech Chach{\'o}lski, Barbara Giunti, Alvin Jin, Claudia Landi}

\date{}

\begin{document}

\maketitle

\begin{abstract}
In Topological Data Analysis, filtered chain complexes enter the persistence pipeline between the initial filtering of data and the final persistence invariants extraction.
It is known that they admit a tame class of indecomposables, called interval spheres. 
In this paper, we provide an algorithm to decompose filtered chain complexes into such interval spheres. 
This algorithm provides geometric insights into various aspects of the standard persistence algorithm and two of its run-time optimizations.
Moreover, since it works for any filtered chain complexes, our algorithm can be applied in more general cases. As an application, we show how to decompose filtered kernels with it.
\end{abstract}

\section{Introduction}

Starting from the 1990s, new methods of topological shape analysis arose \cite{delfinado, Ferri1993, frosini, robins}, and developed in what is nowadays known as {\em persistent homology} \cite{carlsson_topology_data, Edelsbrunner2002}, with applications varying from quantum physics \cite{pierro_homological_2018} to medicine \cite{belchi_lung_2018}, biology \cite{xia_persistent_2014}, and geology \cite{jiang_pore_2018} (see \cite{database} for an extended list of references to applications). 
The underlying idea is that homology captures shape features such as components and voids. 
Measuring their persistence along a filtration makes it possible to infer their significance.
A popular method to apply persistence to data sampled from an underlying shape is to build its {\em Vietoris--Rips simplicial filtration}. 
This construction comes with guarantees about the actual presence of the homological features detected by persistence in the underlying shape \cite{Edelsbrunner2002,oudot}. 

Recently, the authors of \cite{bcw} have proposed a method to study not only the non-contractible parts of the data but also the contractible ones by using directly {\em filtered chain complexes} without applying homology to them. 
Among the results of \cite{bcw}, the most relevant here is the structure theorem stating that filtered chain complexes can be written as direct sums of indecomposables called {\em interval spheres}. 
In plain words, this means that a filtered chain complex contains algebraic shadows of geometry in the form of algebraic disks and spheres glued together.
It is also interesting that filtered chain complexes do not arise only as chain complexes of filtered simplicial complexes. 
For example, the cover construction in \cite{bcw} approximates any tame parametrised chain complex by a filtered chain complex. 
So filtered chain complexes provide also invariants of (forward) commutative ladders \cite{commutative_ladders} and zigzag modules \cite{zigzag_carlsson}. 
Thus, an algorithm decomposing filtered chain complexes can be applied in various scenarios.

The main goal of this paper is to propose an algorithm based on geometry to decompose filtered chain complexes into direct sums of interval spheres. 
With this goal in mind, we first identify {\em quasi-minimal sets of generators} as a satisfactory trade-off between the complexity of the problem of pre-computing minimal sets of generators, which is equivalent to that of the decomposition into interval spheres itself, and more general sets of generators that may contain an arbitrarily large number of elements. 
In the case of a filtered chain complex built from a simplicial complex, simplices themselves form a quasi-minimal set of generators of the filtered chain complex. 
Then we show how to split interval spheres off a filtered chain complex by choosing appropriate pairs of generators and how this split reflects in the boundary matrix of the filtered chain complex. 
In particular, each split is performed by modifying and shrinking the original matrix, first via a row Gaussian elimination and then by deleting two rows and two columns. 
It is worth pointing out that our method does not rely on a definite ordering of generators and can process boundary matrices with rows and columns in any arbitrary order, thus leaving the possibility of choosing a preferred ordering for convenience.

Since our algorithm works for arbitrary filtered chain complexes, not only those arising from filtered simplicial complexes, we can apply it in novel settings. 
As a demonstration of this, we introduce the notion of a {\em filtered kernel}. 
This is a filtered chain complex given by the kernel of an epimorphism between two filtered simplicial complexes. 
We remark that filtered kernels are in general only partially related to kernel persistence \cite{ker-img-coker, morozov-thesis, casas2020distributing}, but arise naturally when studying, e.g., metric contractions. 
In this context, we determine a quasi-minimal set of generators for a filtered kernel and its corresponding boundary matrix. 
In this way, simply applying our decomposition algorithm produces the decomposition of filtered kernels.

As the third contribution of this paper, we provide geometrical insights on the standard persistence algorithm \cite{edels_harer}, viewed as a decomposition of filtered chain complexes, and its clear and compress optimisations \cite{clearcompress,twist}, viewed as peeling off interval spheres from a filtered chain complex. 
Indeed, from our decomposition of filtered chain complexes into interval spheres, we straightforwardly obtain the decomposition of the corresponding persistence module into interval modules \cite{carlsson_zomorodian_computing, oudot} simply forgetting all the intervals of length zero. 
Vice versa, since most of the persistence algorithms work in practice at the chain complex level, it is possible to recover the decomposition of filtered chain complexes from them.
Therefore, our method can be seen as a geometric interpretation of some of the persistence algorithms \cite{adams2014javaplex, ripser_paper, phat_paper, Edelsbrunner2002,Edel-Olsb, maria2014gudhi, morozov2007dionysus} of persistence and their operations on the boundary matrix. 
Combining this with the possibility of applying our algorithm to arbitrary filtered chain complexes, one can extend the application of persistence algorithms beyond filtered cell complexes.
\medskip

{\bf Outline.} In \cref{section_preliminaries}, we introduce the necessary background on filtered chain complexes, and we review the relevant persistence algorithms.
In \cref{section_split_reduction}, we present our decomposition algorithm for filtered chain complexes, and we show how to apply it to filtered kernels.
In \cref{sec_alg_analysis}, we highlight the connections to persistence algorithms and elucidate the geometry behind them. 
Finally, in \cref{sec_conclusions}, we discuss some further questions that may be interesting to explore. 

\section{Preliminaries}\label{section_preliminaries}

Throughout the work, $\mathbb{F}$ denotes a finite prime field of characteristic $p$ for a prime integer $p$.

\begin{point}\label{point_chain_complexes}
	{\em Chain complexes.}
	Let ${\mathbf N}=\{0,1,\ldots\}$ be the set of natural numbers. 
	A (non-negatively graded) chain complex of $\mathbb{F}$-vector spaces is a sequence of linear maps $C=\{\delta_{n+1}\colon C_{n+1}\to C_{n}\}_{n\in{\mathbf N}}$ of $\mathbb{F}$-vector spaces, called \define{differentials}, such that $\delta_{n}\delta_{n+1}=0$ for all $n$ in ${\mathbb N}$. 
	A chain complex $C$ is called \define{compact} if $\bigoplus_{n\in{\mathbf N}} C_n$ is finite dimensional as a vector space \cite{adamek_rosicky}. 
	This happens if and only if $C_n$ is finite dimensional for all $n$ and $C_n$ is trivial for all $n>N\in\mathbf{N}$.
	We denote by $\mathbf{ch}$ the category of compact chain complexes.
	Throughout this work, all considered chain complexes are compact.
	
	We call an \define{$n$-sphere}, or simply a \define{sphere}, the chain complex $\Sp$ that is zero in all degrees but in degree $n$, where it is isomorphic to $\mathbb{F}$.
	We call an \define{$n$-disk}, or simply a \define{disk}, the chain complex $\D$, given by 
	\begin{equation*}
	\D_{k} = 
	\begin{cases*}
	\mathbb{F}  & if  $k = n, n-1$ \\
	0           & otherwise
	\end{cases*},
	\qquad
	\delta_{k} =
	\begin{cases*}
	\mathbf{1} & if $k = n$ \\
	0   & otherwise
	\end{cases*}
	\end{equation*}
	Explicitly:
	\begin{equation*}
	\begin{tikzcd}[row sep=0.1cm]
	& \scriptstyle\cdots
	& \scriptstyle n+1
	& \scriptstyle n
	& \scriptstyle n-1
	& \scriptstyle n-2
	& \scriptstyle\cdots
	\\
	\Sp \cong
	& \cdots \ar[r]
	& 0 \ar[r]
	& \mathbb{F} \ar[r]
	& 0 \ar[r]
	& 0 \ar[r]
	& \cdots
	\\
	\D \cong
	& \cdots \ar[r]
	& 0 \ar[r]
	& \mathbb{F} \ar[r, "\mathbf{1}"]
	& \mathbb{F} \ar[r]
	& 0 \ar[r]
	& \cdots
	\end{tikzcd}
	\end{equation*}
	Since we consider only non-negatively graded chain complexes, $\D[0]\cong \Sp[0]$.
\end{point}

\begin{point}\label{point_homology}
	{\em Homology.}
	The following vector spaces are called respectively the $n$-th \define{cycles} and the $n$-th \define{boundaries} of $C$:
	\[
	Z_nC:=
	\begin{cases}
	X_0 & \text{ if }n=0
	\\
	\ker(\delta_{n}\colon C_{n}\to C_{n-1}) & \text{ if } n\geq 1
	\end{cases},
	\qquad
	B_nC:=\Img(\delta_{n+1}\colon C_{n+1}\to C_n)
	\]
	Since $\delta_{n}\delta_{n+1}=0$, the $n$-th boundaries $B_nC$ form vector subspaces of that of the $n$-th cycles $Z_nC$.
	The quotient $Z_nC/B_nC$ is called the $n$-th \define{homology} of $C$ and is denoted by $H_nC$.
	We write $ZC$, $BC$ and $HC$ to denote the chain complexes $\{Z_nC\}_{n\in {\mathbf N}}$, $\{B_nC\}_{n\in {\mathbf N}}$, and $\{H_nC\}_{n\in {\mathbf N}}$.
	Note that they all have trivial differentials.
	As an example, consider $\Sp$ and $\D$ defined in Paragraph~\ref{point_chain_complexes}.
	We have $H\D=0$, and $H_{n}\Sp=\mathbb{F}$ and $H_{k}\Sp=0$ for all $k\neq n$.
\end{point}

\subsection{Filtered chain complexes}\label{section_tame_ch}

The symbols $\posr$ and $\left[0,\infty\right]$ denote the ordered set of non-negative reals and extended non-negative reals, respectively. 
Functors of the form $X\colon \posr\to \mathbf{ch}$ are also referred to as \define{parametrised chain complexes}.
The value of $X$ at $t$ in $\posr$ is denoted by $X^{t}$, and $X^{s\leq t}\colon X^s\to X^t$ denotes the morphism in $\mathbf{ch}$ that $X$ assigns to $s\leq t$. 
The morphism $X^{s\leq t}$ is also referred to as the \define{transition morphism} in $X$ from $s$ to $t$.
The symbol $X_{n}$ denotes the $n$-th degree of $X$, for all $t\in\posr$, i.e.\ a sequence of vector spaces and maps between them.

\begin{definition}
	An increasing sequence $0=\tau_0<\cdots<\tau_k$ in $ \posr$ \define{discretises} $X\colon \posr\to \mathbf{ch}$ if $X^{s\leq t}\colon X^s\to X^t$ may fail to be an isomorphism only when there is $a\in \{1,\dots, k\}$ such that $s<\tau_{a}\leq t$.
	A parametrised chain complex $X\colon \posr\to \mathbf{ch}$ is called \define{tame} if there is a sequence that discretises it.
\end{definition}

We are especially interested in the following class of tame parametrised chain complexes.

\begin{definition}\label{def_cofibrant}
	A tame parametrised chain complex $X$ is called a \define{filtered chain complex} if, for all $s<t\in \posr$, the transition morphism $X^{s<t}$ is a monomorphism.
\end{definition}

The following definition provides an interesting class of filtered chain complexes, parametrised by a natural number $n$ and an element of $\Omega\coloneqq\{(s,e)\in [0,\infty)\times [0,\infty]\ |\ s\leq e\}$.

\begin{definition}\label{def_interval_sphere}
	Let $n\in\mathbf{N}$ and $\left(s, e\right)\in\Omega$.
	An (\define{$n$-dimensional}) \define{interval sphere} is a filtered chain complex $\intS{s,e}$ isomorphic to:
	\begin{equation*}
	\intS{s,e}^{t} =
	\begin{cases*}
	0     & if $t < s$ \\
	\Sp   & if $s \leq t < e$ \\
	\D[n+1] & if $e \leq t < \infty$
	\end{cases*},
	\quad
	\intS{s,e}^{u\leq t} =
	\begin{cases*}
	0\hookrightarrow 0            & if $u\leq t<s$ \\
	0\hookrightarrow\Sp           & if $u<s\leq t<e$ \\
	\mathbf{1}_{\Sp}                     & if $s\leq u<t<e$ \\
	\Sp\hookrightarrow\D[n+1]     & if $u<e\leq t$ \\
	\mathbf{1}_{\D[n+1]}                 & if $e\leq u< \infty$ 
	\end{cases*}.
	\end{equation*}
\end{definition}

Note that, in the previous definition, we allow $s=e$ for $e<\infty$.
\medskip

We now recall the decomposition theorem of filtered chain complexes from \cite{bcw}. 
We remark that the structure theorem for filtered chain complexes appeared in the literature in different settings in \cite{framed_morse, dualities, structural_filtered, related_work_decomposition}.

\begin{theorem}{\em \cite[Th.\ 4.2]{bcw}} \label{dec_theorem}
	Any filtered chain complex is isomorphic to a finite direct sum $\oplus_{i=1}^{l} \intS[n_{i}]{s_{i},e_{i}}$, where $l$ could possibly be $0$.
	Moreover, if $\oplus_{i=1}^{l}\intS[n_{i}]{s_{i},e_{i}}\cong \oplus_{j=1}^{l'}\intS[n'_{j}]{s'_{j},e'_{j}}$, then $l=l'$ and there is a permutation $\sigma$ of the set $\{1,\ldots, l\}$ such that $n_{i}=n'_{\sigma(i)}$, $s_{i}=s'_{\sigma(i)}$, and $e_{i}=e'_{\sigma(i)}$ for $1\leq i\leq l$.
\end{theorem}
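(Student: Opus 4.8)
The plan is to establish existence and uniqueness separately, in each case reducing the statement about a filtered chain complex $X$ to elementary linear algebra on the differentials together with the filtration data. First I would record the key structural fact: since $X$ is tame, pick a sequence $0=\tau_0<\cdots<\tau_k$ discretising it, and since $X$ is a filtered chain complex, every transition morphism is a monomorphism; hence each $X^{s\leq t}$ is, up to isomorphism, an inclusion of a direct summand is \emph{not} automatic, but $X$ is determined up to isomorphism by the finite diagram $X^{\tau_0}\hookrightarrow X^{\tau_1}\hookrightarrow\cdots\hookrightarrow X^{\tau_k}$ of chain complexes over $\mathbb F$ with injective structure maps. So the whole problem is finite-dimensional. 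For existence, I would argue by induction on $\sum_n\dim X^{\tau_k}_n$ (the total dimension at the top stage). If this is zero, $X\cong\bigoplus$ over the empty set. Otherwise I want to split off one interval sphere. The idea is to choose a degree $n$ minimal such that $X_n$ is nontrivial somewhere, and within that degree pick a generator appearing at the earliest filtration value $s$; chasing its image under $\delta_n$ through the filtration produces either a cycle that stays a cycle (giving an interval sphere with $e=\infty$, an $n$-disk never appearing) or a class that becomes a boundary at some stage $e$ — in the latter case the pair (the $n$-chain, its bounding $(n{+}1)$-chain) spans a copy of $\D[n+1]$ from time $e$ on and a copy of $\Sp$ from time $s$ to $e$, i.e.\ exactly $\intS{s,e}$. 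The nontrivial content is that this copy is a \emph{direct summand} of $X$ compatibly with the filtration; this is where one uses that $\mathbb F$ is a field (so subspaces split) plus a careful choice of complement at each $\tau_a$ that is respected by the inclusions — one builds the complements from the top stage downward, or equivalently invokes the standard fact that a filtered vector space over a field splits as a direct sum of one-step filtered pieces, applied with the differential as extra structure. Peeling this summand off strictly decreases the total dimension, and induction finishes existence.

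For uniqueness I would compute a complete list of isomorphism invariants of $X$ and observe that they determine the multiset $\{(n_i,s_i,e_i)\}$. The natural invariants are the dimensions, for every degree $n$ and every pair $s\le t$ in $\posr$, of the spaces $Z_nX^t$, $B_nX^t$, and of the images $\mathrm{im}(X_n^{s\le t})$ intersected with cycles and boundaries — in other words the persistence module data of $HX$, $ZX$, $BX$, and of the chain complex itself. A direct computation on a single interval sphere $\intS{s,e}$ gives: it contributes to $Z_nX$ an interval module supported on $[s,\infty)$; to $B_nX$ an interval module supported on $[e,\infty)$; to $Z_{n+1}X$ and $B_{n+1}X$ interval modules supported on $[e,\infty)$; and nothing in other degrees. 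Since all these functors are additive in direct sums, for a general $X\cong\bigoplus_i\intS[n_i]{s_i,e_i}$ the rank functions of $Z_\bullet X$ and $B_\bullet X$ are sums of indicator functions of the corresponding intervals, and by the structure theorem for pointwise-finite-dimensional persistence modules over $\posr$ (or simply by inclusion–exclusion on the finitely many values $\tau_a$) the multiset of intervals, hence the multiset $\{(n_i,s_i,e_i)\}$, is recovered. This yields $l=l'$ and the permutation $\sigma$.

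The step I expect to be the main obstacle is the splitting claim in the existence argument: showing that the chosen $\intS{s,e}\hookrightarrow X$ admits a filtered direct complement, i.e.\ a subfunctor $Y\subseteq X$ with $X=\intS{s,e}\oplus Y$ as parametrised chain complexes. Naively one splits at each stage $X^{\tau_a}$, but the complements must be chosen \emph{coherently} so that the inclusions $X^{\tau_{a}}\hookrightarrow X^{\tau_{a+1}}$ respect the decomposition, and simultaneously the differentials must respect it. I would handle this by working at the top stage first: choose in $X^{\tau_k}$ a complement to the $(n{+}1)$- and $n$-dimensional lines of the disk that is $\delta$-stable (possible since the disk is a subcomplex and over a field one can split a subcomplex off a complex — pick any complement to the disk's $n$-line inside $Z_nX^{\tau_k}$ plus any complement to its $(n{+}1)$-line, then correct the $n$-part to kill the differential into the disk), and then intersect this chosen complement with each $X^{\tau_a}$, checking that the generator we peeled was chosen minimally enough (earliest $s$, lowest degree) that these intersections have the right dimensions and still give complements stage by stage. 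Making that bookkeeping precise — essentially the content of the authors' forthcoming algorithmic construction — is the crux; everything else is additivity of functors and the $1$-parameter structure theorem.
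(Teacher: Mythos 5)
First, a point of reference: the paper does not prove this theorem itself; it imports it from \cite{bcw}, and the closest in-paper material is the splitting criterion (i)--(v) recalled at the start of the proof of \cref{prop_split}, together with the algorithm of \cref{section_split_reduction}, which amounts to a constructive existence proof \emph{modulo} that criterion. Your outline has the right skeleton and your choice of pair does essentially reproduce (i)--(v): taking $x$ of minimal degree and minimal entrance time $s$, and $e$ the first parameter at which $x$ becomes a boundary, one can check (using injectivity of the transition maps) that a bounding chain $y$ at time $e$ cannot come from an earlier stage. The genuine gap is that you never establish the one claim that carries all the weight, namely that the subobject spanned by $x$ and $y$ is a \emph{direct summand of $X$ as a parametrised chain complex}; you explicitly defer "the bookkeeping". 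Worse, the recipe you offer for it --- choose a $\delta$-stable complement $W$ of $\langle x,y\rangle$ at the top stage $X^{\tau_k}$ and intersect with each $X^{\tau_a}$ --- fails as stated: for $\tau_a<s$ one would need $X^{\tau_a}\subseteq W$, and in general $W\cap X^{\tau_a}$ must have codimension exactly $\dim\langle x,y\rangle^{\tau_a}$ in $X^{\tau_a}$; neither holds for a generic top-stage complement. The complement must be constructed coherently across all stages and degrees simultaneously, which is exactly what the explicit change of generators in \cref{prop_split} (equivalently, the quotient-and-split argument in \cite{bcw}) accomplishes. Without that construction the induction on total dimension has nothing to recurse on.

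There is also a computational error in the uniqueness part: an interval sphere $\intS{s,e}$ with $e<\infty$ contributes \emph{nothing} to $Z_{n+1}$ or $B_{n+1}$, since the top cell of $\D[n+1]$ is not a cycle; only $Z_n\intS{s,e}\cong\mathbb{I}_{[s,\infty)}$ and, for $e<\infty$, $B_n\intS{s,e}\cong\mathbb{I}_{[e,\infty)}$ survive. Consequently the rank functions of $ZX$ and $BX$ alone recover only the multiset of births and the multiset of deaths in each degree, not their pairing (e.g.\ $\intS[1]{1,3}\oplus\intS[1]{2,4}$ and $\intS[1]{1,4}\oplus\intS[1]{2,3}$ agree on these). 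You must additionally use $H_nX\cong\bigoplus_{i:\,n_i=n,\,s_i<e_i}\mathbb{I}_{[s_i,e_i)}$ and the uniqueness of barcodes of tame persistence modules to pair births with deaths, and then read off the diagonal summands $s_i=e_i$ from what remains of $Z_nX$. You do list $HX$ among your invariants, so this is repairable, but the step as written does not recover the multiset $\{(n_i,s_i,e_i)\}$.
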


\begin{point}\label{relation_int_sphere_int_modules}
	{\em Relation between filtered chain complexes and persistence module.}
	Applying homology to a filtered chain complex, we obtain classical persistence modules \cite{carlsson_zomorodian_computing}. 
	In particular, applying $H_n$ to an $n$-dimensional interval sphere, we obtain either the trivial persistence module or a so called \define{interval module} \cite{oudot}: $H_{n}\intS{s,s}\cong 0$ for $s=e$, and $H_{n}\intS{s,e}\cong\mathbb{I}_{\left[s,e\right)}$ for $s<e$. 
	In general, by the additivity of the homology functor, we can state the following result linking the decomposition of filtered chain complexes into interval spheres to the decomposition of persistence modules into interval modules.

\begin{proposition}\label{prop_int_sphere_int_module}
	Let $X$ be a filtered chain complex. 
	Any decomposition of $X$ into interval spheres induces the decomposition of the persistence module $H_nX$ into interval modules: 
	\[
	X\cong \bigoplus_{i=1}^{l} \intS[n_{i}]{s_{i},e_{i}} \implies
	H_nX\cong \displaystyle\bigoplus_{i \colon  n_i=n, \ s_i<e_i} \mathbb{I}_{\left[s_{i},e_{i}\right)}\, .
	\]
\end{proposition}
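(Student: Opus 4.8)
The plan is to apply the homology functor $H_n$ to the hypothesised isomorphism $X\cong\bigoplus_{i=1}^{l}\intS[n_{i}]{s_{i},e_{i}}$ and then to read off each summand. Recall (Paragraph~\ref{relation_int_sphere_int_modules}) that $H_n$ sends a filtered chain complex to a persistence module, the value at a parameter $t$ being $H_n$ of the chain complex sitting there and the transition maps being the induced ones. The first key step is additivity: direct sums of parametrised chain complexes are formed degreewise and pointwise in $\posr$, and kernels and images of $\mathbb{F}$-linear maps commute with finite direct sums, so $H_n$ preserves finite direct sums also at the level of persistence modules. Hence the hypothesis yields an isomorphism of persistence modules $H_nX\cong\bigoplus_{i=1}^{l}H_n\intS[n_{i}]{s_{i},e_{i}}$.

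The second step is to identify each $H_n\intS[n_{i}]{s_{i},e_{i}}$, splitting into three cases according to the triple $(n_i,s_i,e_i)$. If $n_i\neq n$: by \cref{def_interval_sphere} the chain complex $\intS[n_{i}]{s_{i},e_{i}}^{t}$ is, for every $t$, one of $0$, $\Sp[n_{i}]$, or $\D[n_{i}+1]$; since $H_n\Sp[n_{i}]=0$ for $n\neq n_i$ and $H_n\D[n_{i}+1]=0$ (both recorded in Paragraph~\ref{point_homology}), we get $H_n\intS[n_{i}]{s_{i},e_{i}}=0$. If $n_i=n$ and $s_i=e_i$: the interval sphere only ever takes the values $0$ and $\D[n+1]$, so again $H_n\intS[n]{s_i,s_i}\cong 0$; this is the computation already recalled in Paragraph~\ref{relation_int_sphere_int_modules}. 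If $n_i=n$ and $s_i<e_i$: evaluating $H_n$ pointwise with $H_n\Sp=\mathbb{F}$ and $H_n\D[n+1]=0$, and tracking the transition morphisms of \cref{def_interval_sphere} (the identity on $\Sp$ for $s_i\leq u<t<e_i$, the zero map once the disk is reached), one obtains exactly the interval module $\mathbb{I}_{\left[s_{i},e_{i}\right)}$, which is the other computation recalled in Paragraph~\ref{relation_int_sphere_int_modules}.

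Combining the cases, the nonzero summands are precisely those with $n_i=n$ and $s_i<e_i$, each equal to $\mathbb{I}_{\left[s_{i},e_{i}\right)}$, so
\[
H_nX\cong\bigoplus_{i\colon n_i=n,\ s_i<e_i}\mathbb{I}_{\left[s_{i},e_{i}\right)},
\]
as claimed. There is no serious obstacle here: the only point deserving a line of justification is that $H_n$ commutes with the direct sum at the level of persistence modules and not merely chain complex by chain complex, and this is immediate from the pointwise and degreewise nature of both operations; everything else is the bookkeeping of which interval spheres survive under $H_n$, which is governed by the two identities $H_n\intS{s,s}\cong 0$ and $H_n\intS{s,e}\cong\mathbb{I}_{\left[s,e\right)}$ together with the degree count $H_n\Sp[m]=0$ for $m\neq n$.
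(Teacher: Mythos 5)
Your proposal is correct and follows the same route the paper takes: the paper justifies the proposition in the paragraph preceding it by invoking the additivity of the homology functor together with the two computations $H_{n}\intS{s,s}\cong 0$ and $H_{n}\intS{s,e}\cong\mathbb{I}_{\left[s,e\right)}$, which is exactly your argument, merely written out with the explicit three-way case analysis on $(n_i,s_i,e_i)$. No issues.
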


As a consequence, any algorithm for decomposing filtered chain complexes into indecomposables provides as a by-product another way of computing the barcode decomposition of the persistence module $HX$, simply by forgetting all the intervals $[s_j, e_j)$ for which $s_j=e_j$. 
\end{point}

\cref{dec_theorem} is an encouraging mathematical result, but if one aims to use it in computations, one needs to provide an appropriate translation from the mathematical formulation to feasible computer input. 
With this goal in mind, we first provide the right mathematical setting using the notion of generators and then restrict to a special class of them, the quasi-minimal set of generators, which correspond to a feasible input --- in fact, they correspond to the simplices in a simplicial complex.

\begin{point}\label{point_generators}
	{\em Generators.}
	Let $X$ be a filtered chain complex. 
	With a little abuse of notation, in what follows we write $\intS[-1]{s,s}$ for $\intS[0]{s,\infty}$.
	By tameness, there is a finite collection of interval spheres $\{\intS[n_{i}]{s_{i},s_{i}}\}_{i=1,\dots,m}$ that \define{generates} $X$, i.e.\ such that there is an epimorphism
	\begin{equation}\label{eq_epimorphism_generator}
	\Phi \colon \displaystyle\bigoplus_{i=1}^{m}\intS[n_{i}]{s_{i}, s_{i}} \to X \, .
	\end{equation}
	The morphism $\Phi$ uniquely defines $\Phi_{i}\colon\intS[n_{i}]{s_{i},s_{i}}\to X$,
	for all $i=1,\dots,m$. 
	In turn, for all $i=1,\dots,m$, each morphism $\Phi_{i}\colon\intS[n_{i}]{s_{i},s_{i}}\to X$ can be uniquely described by the vector $x_{i}\coloneqq(\Phi_{i})_{n_{i}+1}^{s_{i}}(1)\in X_{n_{i}+1}^{s_{i}}$ (see \cite[Paragraph 4.5]{bcw}).
	Therefore, $\mathcal{G}=\{x_{1},\dots, x_{m}\}$ is also called a \define{set of generators of $X$}.
	On $\mathcal{G}$, we can define a \define{degree} function $\dg$ and an \define{entrance time} function $\ent$ such that, given the generator $x=\Phi_{n}^{t}(1)$, they return $\dg(x)=n$ and $\ent(x)=t$. 
	A set of generators $\mathcal{G}$ is called \define{minimal} if there is no other set of generators $\mathcal{G}'$ with $\vert \mathcal{G}'\vert<\vert\mathcal{G}\vert$.
	
	Retrieving a minimal set of generators for a filtered chain complex is as hard as decomposing it into interval spheres. 
	So, it is more convenient to consider larger sets of generators.
	With this goal in mind, we recall that, for each $n\in\mathbf{N}$, the radical of $X_{n}$ is the subfunctor $\text{rad}X_{n}\subset X_{n}$ whose value $\text{rad}(X_{n})^{t}$ is the subspace of $X_{n}^{t}$ given by the sum of all the images of $X_{n}^{s<t}$ for all $s<t$. 
	A set of generators $\mathcal{G}$ of $X$ is called \define{quasi-minimal} if, for all $n\in\mathbf{N}$ and $t\in\posr$, the set 
	\[
	\{x \in \mathcal{G} \ \vert \ \ent(x)=t \ \text{ and } \dg(x)=n \}\]
	is a basis of $(\faktor{X_{n}}{\mathrm{rad}X_{n}})^{t}$.
	\medskip 
	
	As an example, take $X$ to be an interval sphere $\intS{s,e}$, with $n\in\mathbf{N}$ and $(s,e)\in\Omega$.
	A finite set of generators of $X$ is given by $\mathcal{G}=\{x_{n}^{s}, x_{n+1}^{e}\}$ with $x_{n}^{s}\in X_n^s\setminus\{0\}$, $x_{n+1}^{e}\in X_{n+1}^e\setminus\{0\}$ arbitrarily chosen.
	If $s<e$, then $\mathcal{G}$ is minimal. 
	If $s=e$, $\mathcal{G}$ is not minimal, since $x_{n}^{s}$ is redundant. 
	However, it is quasi-minimal.  
\end{point}

We use quasi-minimal sets of generators of $X$ to give a matrix encoding of $X$.

\begin{point}\label{point_filt_bound_matrix}
	{\em Total boundary matrix.}
	Let $X$ be a filtered chain complex with differential $\delta$, and $\mathcal{G}=\{x_{1}, \dots, x_{m}\}$ be an arbitrarily totally ordered quasi-minimal set of generators of $X$.  
	From the definition of radical, and the assumptions that $X$ is tame and all its transition morphisms are monomorphisms, it follows that, for each $n\in\mathbf{N}$ and $t\in\posr$, 
	\begin{equation}\label{eq_basis}
	X_{n}^t\cong \bigoplus_{s\leq t\in\posr} 
	\left(
	\faktor{X_{n}}{\mathrm{rad}X_{n}}
	\right)^{s}.
	\end{equation}
	
	By definition, a quasi-minimal set of generators $\mathcal{G}$ of $X$ provides a basis of $X_n^{t}$. 
	Thus, for each $x_j\in \mathcal{G}$ with $\dg(x_{j})=n+1$ and $\ent(x_{j})=t$, we can write $\delta_{n+1}^t(x_j)$ uniquely as a linear combination of elements $x_{i} \in \mathcal{G}$ such that $\dg(x_{i})=n$ and $\ent(x_{i})\leq t$.  
	The total boundary matrix of $X$ is the matrix $d =(d^i_j)$, $1\leq i,j \leq \vert \mathcal{G}\vert$, such that $d^i_j$ is equal to the coefficient of $x_i$ in $\delta_{n+1}^t(x_j)$.
	Hereafter, we adopt the notational convention by which superscripts denote row indices and subscripts column indices.
	Accordingly, $d^{i}$ denotes the $i$-th row and $d_{i}$ the $i$-th column of $d$.
	
	By construction, we can assume the rows and columns of $d$ are labelled by the degree and entrance time of the corresponding generator in $\mathcal G$.
	The sub-matrix of $d$ corresponding to the rows and columns associated with generators with degree $n-1$ and $n$, respectively, is the boundary matrix of the $n$-th differential $\delta_n$ of $X$. 
	In the case when the total order on $\mathcal{G}$ is given first by degree, $d$ is a diagonal block matrix with blocks given by the differential $\delta_n$.
	
	In practical situations, we often obtain the filtered chain complex $X$ from a filtered simplicial complex $\Sigma$. 
	In this case, the set of simplices of $\Sigma$ provides a quasi-minimal set of generators of $X$, and the total boundary matrix of $X$ coincides with the boundary matrix of $\Sigma$.
\end{point}

Since, in what follows, we constructively decompose a filtered chain complex by modifying its (quasi-minimal) set of generators, we now review in detail how the change of generators affects the corresponding boundary matrix.

\begin{point}\label{point_change_bound_matrix}
	{\em Change of generators.}
	Let $X$ be a filtered chain complex and $\mathcal{G}$ a totally ordered quasi-minimal set of generators of $X$ labelled by degree and entrance time. 
	Suppose now we change one of the generators $x_i\in \mathcal{G} $ to a new element 
	\[
	\tilde{x}_i= \alpha x_{i} + \displaystyle\sum_{j\neq i} \beta_{j}x_{j}
	\] 
	where $\alpha,\beta_j \in \mathbb{F}$ are such that $\alpha\neq 0$, and $\beta_{j}=0$ when $\ent(x_{j})>\ent(x_{i})$ or $\dg(x_{j})\neq\dg(x_{i})$. 
	Taking $\mathcal{\tilde{G}}=\{\tilde{x}_{1},\dots, \tilde{x}_{m}\}$ with $\tilde{x}_k=x_k$ for $k\ne i$, and $\tilde{x}_i$ as described, we claim that $\mathcal{\tilde{G}}$ is again a quasi-minimal set of generators of $X$.
	Indeed,	the change from $\mathcal{G}$ to $\mathcal{\tilde{G}}$ corresponds to a change of basis in $(\faktor{X_{n}}{\text{rad}X_{n}})^{t}$, with $n=\deg(x_i)$ and $t=\ent(x_i)$. 
	Hence, it is realised by an isomorphism $\psi\colon X\to X$, showing that $\mathcal{\tilde{G}}$ is also a set of generators of $X$.
	Moreover, in $\mathcal{\tilde{G}}$ the degree and the entrance time functions operate as follows: 
	\begin{align*}
	\dg(\tilde{x}_k) & =\dg(x_k) \qquad \ \text{ for all } k  =1,\dots,m \, ;
	\\
	\ent(\tilde{x}_j) & =\ent(x_j) \hspace{1.05cm} \text{ for all } j=1,\dots\hat{\imath}, \dots,m \, ;
	\\
	\ent(\tilde{x}_i) &=\max\{\ent(x_i),\max\{\ent(x_j) \ \vert \ j=1,\dots,\hat{\imath},\dots,m \text{ such that } \beta_{j} 
	\neq 0\} \} 
	\\
	&=\ent(x_{i}) \, .
	\end{align*}
	Thus, for each $n\in\mathbf{N}$ and $t\in\posr$, the number of generators with degree $n$ and entrance time $t$ is the same in $\mathcal{G}$ and $\mathcal{\tilde{G}}$, yielding that  $\mathcal{\tilde{G}}$ is quasi-minimal as well.
	
	The new total boundary matrix $\tilde{d}=(\tilde{d}_{j}^{i})$ of $X$ relative to $\mathcal{\tilde{G}}$ can be deduced from the original total boundary matrix $d=(d^i_j)$ relative to $\mathcal G$ by Gaussian elimination.
	In particular, the substitution of $x_i$ by $\tilde{x}_i= \alpha x_i$, with $\alpha\neq 0\in\mathbb{F}$, is realized by the row operation $d^i\to (\tilde{d})^{i}=(\alpha)^{-1} d^i$ and by the column operation $d_i\to (\tilde{d})_{i}=\alpha d_i$. 
	Similarly, the substitution of $x_i$ by $\tilde{x}_i= x_i+\beta x_k$, with
	$\beta\ne 0\in\mathbb{F}$ and $k\ne i$, is realized by the row operation $(\tilde{d})^{k}=d^k-\beta d^i$ and by the column operation $(\tilde{d})_{i}=d_i+\beta d_k$. 
	Note that, as degrees and entrance times of generators have not changed, the rows and columns of $\tilde{d}$ are still labelled with the same degrees and entrance times as in $d$, respectively.
\end{point}

\subsection{Persistence algorithms}\label{sec_standard_alg}

In view of relating our algorithm for filtered chain complexes decomposition to persistence algorithms, we recall the \define{standard persistence algorithm} \cite[Sec.\ VII.1]{edels_harer}, in short the SPA, and two of its runtime optimisations known, respectively, as clear and compress \cite{clearcompress, twist}.
\medskip

Let $d$ be the total boundary matrix of $X$ associated with the differential $\delta$ with respect to $\mathcal{G}$. 
In the SPA and its variants, the generators in $\mathcal{G}$ are sorted according to first entrance times, then degrees, and finally randomly to break the ties. 
This entails the fact that the row index of the lowest non-zero element in a column, called a \define{pivot}, corresponds to the youngest face of the column simplex. 
The boundary matrix is called \define{reduced} if there are no two non-zero columns with the same pivot.
Suppose $i$ is a pivot of column $j$. In that case, the generator corresponding to the $i$-th row gives birth to a homology class (hence it is said to be \define{positive}), while the generator corresponding to the $j$-th column causes the death of that homology class (hence it is said to be \define{negative}). 
The indices $\left(i, j\right)$ form a \define{persistence pair}.
The collection of all persistence pairs with $i<j$ of the reduced boundary matrix of $X$ provides the persistence diagram of $HX$, i.e. its barcode decomposition \cite{carlsson_zomorodian_computing, edels_harer, Edelsbrunner2002}.

The standard persistence algorithm \cite{edels_harer}, as well as its cohomological version \cite{dualities}, achieve such reduction using left-to-right column operations. 
The worst-case runtime complexity is cubic in the number of generators (see \cite{morozov_worst} for an example where the worst-case is attained). 
To obviate this problem, time optimisation strategies have been introduced.

\begin{point}\label{point_clear_compress_gen}
	{\em Clear and compress \cite{twist}.} 
	The \define{clear} optimisation is based on the observation that if $i$ is the pivot of a reduced column (i.e.\ it is positive), then during the reduction the $i$-th column will eventually turn out to be trivial. 
	Thus, instead of explicitly performing all the operations to make it trivial, it can be directly cleared, i.e.\ set to zero. 
	Symmetrically, the \define{compress} optimisation is based on the observation that if the $j$-th column contains a pivot (i.e.\ it is negative), then the row index $j$ cannot contain any pivot.
	Thus, one can avoid unnecessary computation and set the $j$-th row directly to zero. 

	Since the clear strategy requires processing columns in decreasing degree order while the compress runs in increasing degree order, they cannot be easily combined. 
	A technique to combine them is \cite{clearcompress}, where the clear is applied first, but only limited to pairs not exceeding a threshold persistence, followed by the compress on the remaining columns. 
    
    From the point of view of decomposing persistence modules, the clear and compress are clever runtime optimisations that take advantage of the property of boundary maps. 
    On the other hand, from the point of view of decomposing filtered chain complexes, the clear and compress have a geometrical explanation corresponding to the splitting of the pair of generators of an interval sphere (see Paragraph~\ref{point_standard}). 
\end{point}

\section{Decomposition method for filtered chain complexes}\label{section_split_reduction}

In this section, we present a computational strategy to decompose filtered chain complexes.
We begin by proving results ensuring the correctness of the algorithm, and then we provide its pseudo-code. 
We then run the algorithm on two examples coming from the Vietoris--Rips filtration of two point clouds to clarify it. 
Finally, we show how to apply the algorithm to filtered kernels in a situation where the filtered chain complex to be decomposed does not correspond to a filtered simplicial complex.
\medskip

Let $X$ be a filtered chain complex, and assume that we are given an arbitrarily totally ordered quasi-minimal set of generators ${\mathcal G}=\{x_1,x_2,\ldots, x_{m}\}$ of $X$.
We denote by $d=(d^i_j)$ the total boundary matrix associated with $\delta$ with respect to ${\mathcal G}$, i.e.\ $\delta(x_j)=\sum_{i=1}^m d^i_j x_i$.
All generators, and hence the corresponding rows and columns of $d$, are assumed to be labelled by degree and entrance time.
\medskip

A pair $(x_{i},x_{j})$ of generators in $\mathcal{G}$ is said to satisfy the \textbf{split conditions} \hyperlink{SC1}{\bf SC1-2-3} if 
\begin{equation*}
\textbf{SC\hypertarget{SC1}{1}: } d_{j}^{i}\neq 0; \qquad
\textbf{SC\hypertarget{SC2}{2}: } x_{i} \in \displaystyle\argmax_{h \ \mid \ d_{j}^{h}\neq 0} \ent(x_h);
\qquad
\textbf{SC\hypertarget{SC3}{3}: } x_j \in \argmin_{h\ \mid \ d_{h}^{i}\neq 0}\ent(x_h).
\end{equation*}
In plain words, $x_{i}$ is any of the possibly many elements with the smallest entrance time in the differential of $x_{j}$, and $x_{j}$ is any of the possibly many elements with the greatest entrance time whose image under $\delta$ contains $x_{i}$. 
As we will discuss in Paragraph~\ref{remark_apparent_pairs}, these conditions are related to the so-called \define{apparent} and \define{emergent pairs}.
\medskip

The reason for the name of the above conditions is that they allow us to split off an interval sphere (see \cref{prop_split}). 
Splitting interval spheres makes the differential more and more trivial. 
Thus, the strategy for the decomposition is to prune off elements to simplify the differential, similarly to what is done in \cite{gallais} for single chain complexes.
\medskip

If $\mathcal{G}$ is sorted as in the SPA (see \cref{sec_standard_alg}), the existence of a pair $(x_{i},x_{j})$ satisfying \hyperlink{SC1}{\bf SC1-2-3} is straightforwardly ensured, provided $d\ne 0$. 
Indeed, in that case, the generator $x_{j}$ can be taken to correspond to the first non-zero column of $d$ (which exists by the assumption $d\ne 0$), and the generator $x_{i}$ can be taken to correspond to the lowest non-zero element in such column. 

We now show that the existence of such a pair is still guaranteed, as long as there is at least one non-trivial boundary, even if the generators are ordered according to different ordering. 

\begin{proposition}\label{prop_existence_xi_xj}
	Let $X$ be a filtered chain complex and ${\mathcal G}=\{x_1,x_2,\ldots, x_{m}\}$ a quasi-minimal set of generators of $X$.
	Then there exists a pair $(x_{i},x_{j})$  of generators in $\mathcal{G}$ that satisfy the split conditions \hyperlink{SC1}{\bf SC1-2-3} if and only if there is at least one non-trivial differential.
\end{proposition}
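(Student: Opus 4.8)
The plan is to prove the two implications separately, with the forward direction being essentially immediate and the reverse direction requiring a careful extremality argument.

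\textbf{The easy direction.} Suppose a pair $(x_i, x_j)$ satisfying \hyperlink{SC1}{\bf SC1-2-3} exists. Then in particular \hyperlink{SC1}{\bf SC1} gives $d_j^i \neq 0$, so the entry $d_j^i$ of the total boundary matrix is nonzero. By the construction of the total boundary matrix in Paragraph~\ref{point_filt_bound_matrix}, this coefficient appears in $\delta_{n+1}^t(x_j)$ where $n = \dg(x_i) = \dg(x_j) - 1$ and $t = \ent(x_j)$; hence $\delta_{n+1}^t(x_j) \neq 0$, so $X$ has a non-trivial differential. (Contrapositively, if all differentials of $X$ are trivial, then $d = 0$ and no pair can satisfy \hyperlink{SC1}{\bf SC1}.)

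\textbf{The hard direction.} Suppose $X$ has at least one non-trivial differential, equivalently $d \neq 0$. I want to produce a pair satisfying all three conditions. The naive approach mimicking the SPA --- take $x_j$ from the first nonzero column, $x_i$ the lowest nonzero entry --- does \emph{not} obviously work here because the generators are in arbitrary order, so "lowest" need not mean smallest entrance time, and there is no guarantee the resulting pair simultaneously satisfies \hyperlink{SC2}{\bf SC2} and \hyperlink{SC3}{\bf SC3}. Instead I would argue by an alternating extremal selection. First choose, among all pairs $(h,k)$ with $d_k^h \neq 0$, one with $\ent(x_h)$ \emph{minimal}; call it $(x_i^{(0)}, x_j^{(0)})$. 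This $x_i := x_i^{(0)}$ then automatically satisfies \hyperlink{SC2}{\bf SC2} for \emph{every} column $k$ with $d_k^i \neq 0$, since $\ent(x_i)$ is globally minimal among row-indices of nonzero entries and in particular minimal within column $j$ --- wait, \hyperlink{SC2}{\bf SC2} asks $x_i$ to \emph{maximize} entrance time in column $j$; so instead the right move is: fix this minimal-entrance-time row $x_i$, then among all columns $k$ with $d_k^i \neq 0$ choose $x_j$ with $\ent(x_j)$ \emph{maximal} (\hyperlink{SC3}{\bf SC3}, noting we want the min over $h$ with $d_h^i \neq 0$, so actually we pick the column realizing $\argmin_h \ent(x_h)$ among columns hitting row $i$). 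The subtlety is that after fixing $x_j$ this way, we must recheck \hyperlink{SC2}{\bf SC2}: is $x_i$ of maximal entrance time among nonzero entries of column $j$? Here is where the key structural fact enters: since $X$ is a \emph{filtered chain complex}, all transition morphisms are monomorphisms, and by the decomposition \eqref{eq_basis} the entrance time of $x_i$ being \emph{globally minimal} among all nonzero matrix entries forces it to be $\le \ent(x_h)$ for every $h$ with $d_j^h \neq 0$ --- but \hyperlink{SC2}{\bf SC2} wants the reverse inequality.

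\textbf{Resolving the tension.} The resolution is that one should not start from a globally minimal row but run a finite alternating optimization: pick any nonzero entry; move the row index up to maximize $\ent$ within its column (enforcing \hyperlink{SC2}{\bf SC2}); then move the column index to minimize $\ent$ within that row (enforcing \hyperlink{SC3}{\bf SC3}); repeat. Each step either strictly increases $\ent(x_i)$ or strictly decreases $\ent(x_j)$ (or leaves the pair fixed), and the quantity $\ent(x_j) - \ent(x_i)$ is bounded below (it is nonnegative whenever $d_j^i \neq 0$, because the differential of $x_j$ at time $\ent(x_j)$ can only involve generators with entrance time $\le \ent(x_j)$, cf. Paragraph~\ref{point_filt_bound_matrix}), so the process terminates at a pair that is simultaneously column-maximal in row-entrance-time and row-minimal in column-entrance-time --- precisely \hyperlink{SC1}{\bf SC1-2-3}. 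I expect the main obstacle to be verifying that this alternating process is well-defined and terminates: one must confirm that each update keeps the current entry nonzero (immediate, since we only move to another nonzero entry in the same row or column) and that the monovariant $\ent(x_j) - \ent(x_i) \ge 0$ genuinely cannot cycle, which uses the filtered-chain-complex hypothesis (monomorphic transitions) to guarantee $\delta(x_j)$ never involves a generator entering strictly later than $x_j$. Once termination is established, the terminal pair satisfies all three split conditions by construction, completing the proof.
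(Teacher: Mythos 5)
Your proposal is correct and follows essentially the same route as the paper: the forward direction via \hyperlink{SC1}{\bf SC1}, and the reverse direction via the same alternating extremal selection (maximize $\ent(x_i)$ within the current column, minimize $\ent(x_j)$ within the current row) with termination forced by strict monotonicity of the entrance times along the iteration. The only cosmetic difference is that you justify termination by the bounded monovariant $\ent(x_j)-\ent(x_i)\ge 0$, which should really be phrased as strict monotone change on the finite set of entrance times (finiteness of $\mathcal{G}$, i.e.\ tameness), exactly as the paper does.
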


\begin{proof}
    If there exists such a pair, then by \hyperlink{SC1}{\bf SC1} $X$ has at least one non-trivial differential.
    
    We now prove the other implication. 
    Since $X$ has at least one non-trivial differential, there exists an index $i_{0}\in\{1,\dots,m\}$ and a degree $n\in\mathbb{N}$ such that $x_{i_{0}}\in \Img(\delta_{n})$, implying the existence of $h$ such that $d_{h}^{i_{0}}\neq 0$.
	Choose $x_{j_{0}}$ in $\{\argmin_{h \mid d_{h}^{i_{0}}\neq 0} \ent(x_h)\}$.
	Observe that $\ent(x_{j_{0}})\geq \ent(x_{i_{0}})$. 
	Now, if $x_{i_0}\in \{\argmax_{h \mid d_{j_{0}}^{h}\neq 0} \ent(x_h)\}$, the claim is proved with $i=i_{0}$ and $j=j_{0}$.
	Otherwise, choose $x_{i_{1}}\in\{\argmax_{h \mid d_{j_{0}}^{h}\neq 0} \ent(x_h)\}$.
	We have $\ent(x_{i_{1}})>\ent(x_{i_{0}})$.
	If $x_{j_{0}}\in\{\argmin_{h \mid d_{h}^{i_{1}}\neq 0} \ent(x_h)\}$, the claim is proved with $i=i_{1}$ and $j=j_{0}$.
	Otherwise, choose $x_{j_{1}}\in\{\argmin_{h \mid d_{h}^{i_{1}}\neq 0} \ent(x_h)\}$ and note that $\ent(x_{j_{1}}) < \ent(x_{j_{0}})$.
	We iterate the process, selecting at the $k$-th step generators two generators such that one of the following happens: \textit{(i)} $x_{i_{k}}\in \{\argmax_{h \mid d_{j_{k}}^{h}\neq 0} \ent(x_h)\}$, so the claim is proved with $i=i_{k}$ and $j=j_{k}$, \textit{(ii)} $x_{j_{k-1}}\in \{\argmin_{h \mid d_{h}^{i_{k}}\neq 0} \ent(x_h)\}$, so the claim is proved with $i=i_{k}$ and $j=j_{k-1}$, \textit{(iii)} none of the above and the process iterates to step $k+1$.
	Since $X$ is tame, this procedure eventually ends, proving the claim.
\end{proof}

We are interested in a pair satisfying \hyperlink{SC1}{\bf SC1-2-3} because it determines generators of an interval sphere that can be split, thus shrinking a filtered chain complex $X$ to a smaller filtered chain complex $X'$. 
In the next proposition, we show how to identify the generators to be split using the boundary matrix of $X$ and retrieve the boundary matrix of $X'$ by a row Gaussian elimination on the boundary matrix of $X$.

\begin{proposition}\label{prop_split}
Let $X$ be a filtered chain complex with at least one non-trivial differential, and ${\mathcal G}=\{x_1,x_2,\ldots, x_{m}\}$ be a totally ordered quasi-minimal set of generators of $X$. 
Let $\left(x_{i},x_{j}\right)$ be a pair of generators in $\mathcal{G}$ satisfying the split conditions \hyperlink{SC1}{\bf SC1-2-3}. 
Then there exists a quasi-minimal set of generators $\tilde{\mathcal{G}}=\{\tilde{x}_1,\dots,\tilde{x}_{m}\}$ of $X$ for which
\begin{enumerate}
\item\label{claim_1} $\tilde{x}_{j}=x_j$, $\tilde{x}_{i}=\delta(x_j)$, and the pair $\left(\tilde{x}_{i},\tilde{x}_{j}\right)$ satisfies the split conditions \hyperlink{SC1}{\bf SC1-2-3};
\item\label{claim_2} Each $\tilde{x}_k\in \tilde{\mathcal{G}}$ is labelled with the same degree and entrance time as $x_k\in {\mathcal G}$, $k=1,\dots,m$;
\item\label{claim_3} $X\cong\intS{s,e}\oplus X'$, with $n=\dg(\tilde{x}_i)$, $s=\ent(\tilde{x}_i)$, $e=\ent(\tilde{x}_j)$, and $X'$ is the filtered chain complex generated by $\mathcal{G'}=\tilde{\mathcal{G}}\setminus\{\tilde{x}_i,\tilde{x}_{j}\}$ with differential $\delta'$ induced by $\delta$;
\item\label{claim_4} The $(m-2)\times(m-2)$ total boundary matrix $d'$ of $X'$ with respect to $\mathcal{G'}$ is obtained from the $m\times m$ total boundary matrix $d$ of $X$ with respect to $\mathcal{G}$ by adding to each of its $k$-th row with $d^k_j\ne 0$ the $i$-th row multiplied by $-(d^i_j)^{-1}d^k_j$, and next removing its $i$-th and $j$-th columns, as well as its $i$-th and $j$-th rows.
\end{enumerate}
\end{proposition}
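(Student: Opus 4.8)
The plan is to construct the new quasi-minimal set of generators $\tilde{\mathcal{G}}$ explicitly via the change-of-generators machinery of Paragraph~\ref{point_change_bound_matrix}, then verify the four claims in order, with the algebraic splitting in claim~\ref{claim_3} being the conceptual heart of the argument. First I would define $\tilde{x}_i \coloneqq \delta(x_j)$ and keep $\tilde{x}_k = x_k$ for all $k \neq i$. The key preliminary observation is that, by \hyperlink{SC2}{\bf SC2}, every generator $x_h$ appearing with nonzero coefficient in $\delta(x_j) = \sum_h d_j^h x_h$ satisfies $\ent(x_h) \leq \ent(x_i)$, and of course $\dg(x_h) = \dg(x_i) = n$; moreover the coefficient $d_j^i = \alpha$ of $x_i$ is nonzero by \hyperlink{SC1}{\bf SC1}. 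Hence $\tilde{x}_i$ has exactly the form $\alpha x_i + \sum_{h \neq i}\beta_h x_h$ allowed in Paragraph~\ref{point_change_bound_matrix}, so $\tilde{\mathcal{G}}$ is again a quasi-minimal set of generators of $X$ and claim~\ref{claim_2} follows immediately: degrees are unchanged, the entrance times of the untouched generators are unchanged, and $\ent(\tilde{x}_i) = \ent(x_i)$ since the maximum in the formula there is attained already at $x_i$.

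For claim~\ref{claim_1}, with $\tilde{x}_j = x_j$ and $\tilde{x}_i = \delta(x_j)$ I would compute the new total boundary matrix $\tilde d$. Writing $\tilde\delta = \delta$ (the differential is unchanged, only the basis is), the column $\tilde d_j$ now records $\delta(\tilde x_j) = \delta(x_j) = \tilde x_i$, so $\tilde d^{\,h}_j = \delta_{h,i}$ (Kronecker): the $j$-th column of $\tilde d$ is the standard basis vector $e_i$. This is precisely the column operation $\tilde d_j = $ (old $d_j$ rewritten in the new basis), and one checks it is consistent with the row operations described in claim~\ref{claim_4}. Then \hyperlink{SC1}{\bf SC1} and \hyperlink{SC2}{\bf SC2} for $(\tilde x_i,\tilde x_j)$ are trivial since that column is $e_i$; and \hyperlink{SC3}{\bf SC3} for $(\tilde x_i, \tilde x_j)$, i.e. that $\tilde x_j$ minimizes $\ent$ among generators whose boundary involves $\tilde x_i$, requires a short argument: the set of columns of $\tilde d$ with a nonzero entry in row $i$ is related to that of $d$ by the row elimination, but since we only add multiples of row $i$ to other rows, the row-$i$ support of columns is unchanged except possibly at column $j$ itself, which still contains $i$; combined with \hyperlink{SC3}{\bf SC3} for the original pair and the fact that $\ent(\tilde x_k) = \ent(x_k)$, condition \hyperlink{SC3}{\bf SC3} is inherited.

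The main obstacle is claim~\ref{claim_3}: exhibiting the isomorphism $X \cong \intS{s,e} \oplus X'$. Here I would argue as follows. After passing to $\tilde{\mathcal{G}}$, the sub-chain-complex-with-filtration spanned by $\tilde x_i$ (in degree $n$, entering at $s = \ent(\tilde x_i)$) and $\tilde x_j$ (in degree $n+1$, entering at $e = \ent(\tilde x_j)$) is closed under $\delta$: indeed $\delta(\tilde x_j) = \tilde x_i$ by construction, and $\delta(\tilde x_i) = \delta\delta(x_j) = 0$. Checking entrance times and using Definition~\ref{def_interval_sphere}, this sub-object is isomorphic to $\intS{s,e}$ (the case $s = e$ being covered by the convention, and $s \leq e$ holding because of \hyperlink{SC3}{\bf SC3}/\hyperlink{SC2}{\bf SC2} forcing $\ent(x_i) \le \ent(x_j)$). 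To split it off I must produce a complementary filtered subcomplex; the natural candidate is the span of $\mathcal{G}' = \tilde{\mathcal{G}} \setminus \{\tilde x_i, \tilde x_j\}$ with the induced differential $\delta'$, where $\delta'(\tilde x_k)$ is obtained from $\delta(\tilde x_k)$ by deleting any $\tilde x_i$-component. The point to verify is that this really defines a filtered chain complex (i.e. $\delta'\delta' = 0$, which follows from $\delta\delta = 0$ after projecting away the interval-sphere summand — one uses that $\tilde x_j$ never appears in any $\delta(\tilde x_k)$ for $k \ne i$, which is exactly \hyperlink{SC3}{\bf SC3}: no generator with larger entrance time than $\tilde x_j$ can have $\tilde x_i$ in its boundary in a way that forces $\tilde x_j$ in), and that the direct-sum decomposition of underlying chain complexes is compatible with the filtration, which reduces to equation~\eqref{eq_basis} and the fact that removing two generators of fixed degree and entrance time removes exactly the corresponding summands in~\eqref{eq_basis}. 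Claim~\ref{claim_4} is then a bookkeeping translation: the prescribed row operations ``add $-(d^i_j)^{-1} d^k_j$ times row $i$ to row $k$'' are exactly the change-of-generators operations from Paragraph~\ref{point_change_bound_matrix} that replace each $x_k$ by $\tilde x_k = x_k - (d^i_j)^{-1} d^k_j\, \tilde x_i$, clearing the $\tilde x_i$-component out of $\delta(\tilde x_k)$; after this, column $j$ is $e_i$ and row $i$ has its only surviving entry at column $j$, so deleting rows and columns $i,j$ leaves precisely the matrix of $\delta'$ on $\mathcal{G}'$. I would close by remarking that well-definedness of $d'$ (independence of the order in which the row operations are performed, and the claim that deleting row $i$ loses no information) is guaranteed by \hyperlink{SC3}{\bf SC3}, mirroring the role that choosing a pivot plays in the standard persistence algorithm.
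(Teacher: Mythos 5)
Your constructions for claims \emph{1}, \emph{2} and \emph{4} match the paper's first change of generators (replacing $x_i$ by $\delta(x_j)$, which is realised by exactly the row operations of claim \emph{4}), and your observation that these operations leave the support of row $i$ unchanged, so that \textbf{SC3} is inherited by $(\tilde x_i,\tilde x_j)$, is correct. The gap is in claim \emph{3}, which is the heart of the proposition. After only this one change of generators, the span of $\mathcal{G}'=\tilde{\mathcal{G}}\setminus\{\tilde x_i,\tilde x_j\}$ is \emph{not} $\delta$-invariant: other degree-$(n+1)$ generators $\tilde x_k$ may still have $\tilde x_i$ in their boundary, and degree-$(n+2)$ generators may have $\tilde x_j$ in theirs. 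Your fix --- defining $\delta'$ by deleting the $\tilde x_i$-component --- produces the quotient $X/\intS{s,e}$, not a complement: the candidate isomorphism $X\to \intS{s,e}\oplus X'$ given by the two projections is not a chain map whenever some $\tilde d^i_k\neq 0$ with $k\neq j$, since then $p(\delta\tilde x_k)=\tilde d^i_k\tilde x_i\neq 0=\delta(p(\tilde x_k))$ for the projection $p$ onto the span of $\{\tilde x_i,\tilde x_j\}$. So the splitting of $0\to\intS{s,e}\to X\to X/\intS{s,e}\to 0$ is exactly what remains to be proved, and your appeal to equation (2.2) only addresses the graded vector spaces, not the differentials. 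Relatedly, your parenthetical that ``$\tilde x_j$ never appears in any $\delta(\tilde x_k)$ \dots which is exactly SC3'' is not right: in this basis $\tilde x_j$ can perfectly well appear in boundaries of degree-$(n+2)$ generators, and \textbf{SC3} says nothing about that.

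The paper closes this gap with a \emph{second} change of generators: for every $k\neq j$ with $\tilde d^i_k\neq 0$, replace $\tilde x_k$ by $\tilde x_k - \tilde d^i_k\,\tilde x_j$ (a column operation, legitimate by Paragraph 2.9 precisely because \textbf{SC3} gives $\ent(\tilde x_j)\le\ent(\tilde x_k)$ for such $k$). After this, row $i$ of the matrix becomes the standard vector with a $1$ in column $j$, and then $\delta^2=0$ forces row $j$ to vanish; hence the span of $\mathcal{G}'$ really is a filtered subcomplex complementary to the interval sphere, and deleting rows and columns $i,j$ is justified. (The paper in fact derives claim \emph{3} by verifying conditions (i)--(v) of the splitting criterion of \cite[Thm.\ 4.2]{bcw} using \textbf{SC2}--\textbf{3}, but the second change of generators is still needed for claim \emph{4}.) Either route --- the explicit column operations or the citation of the splitting criterion --- must appear in your argument; without one of them claim \emph{3} is asserted rather than proved.
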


\begin{proof}
Recall from \cite[Thm. 4.2]{bcw} that a sufficient condition to split an interval sphere $\intS{s,e}$ out of a filtered chain complex $X$ with non-trivial differential $\delta$ is to take two non-zero elements $x\in X_{n-1}^s$ and $y\in X_n^e$ that satisfy the following properties: 

	\begin{alignat*}{3}\label{prop:interval}
	&\text{(i)} && \delta_{n-1}(x)= 0 \, ; && \\
	&\text{(ii)} && \delta_n(y)=X_{n-1}^{s\le e}(x) \, ; && \\
	&\text{(iii)} \ && x\not\in \Img(X_{n-1}^{t<s}) \qquad \quad && \text{ for any } t<s\, ; \\
	&\text{(iv)} && y\not\in \Img(X_n^{t<e})  && \text{ for any } t<e \, ; \\ 
	&\text{(v)} \ && X_{n-1}^{s<t}(x) \not\in \Img(\delta_n^t) && \text{ for any } t<e \, .
	\end{alignat*}
	Under these assumptions, $X$ is isomorphic to $\intS{s,e}\oplus X'$ for a filtered chain complex $X'$ uniquely determined up to isomorphism by quotienting out $x$ and $y$. 

The existence of a pair of generators $\left(x_{i},x_{j}\right)$  of $\mathcal{G}$ satisfying the split conditions \hyperlink{SC1}{\bf SC1-2-3}, ensured by \cref{prop_existence_xi_xj}, is not enough to guarantee property (ii). 
Let us therefore construct a possibly different set of quasi-minimal generators $\tilde{\mathcal G}\coloneqq\{\tilde{x}_{1},\dots, \tilde{x}_{m}\}$ so that the pair $\left(\tilde{x}_{i},\tilde{x}_{j}\right)$ satisfies properties (i--v).

The first step is to define   $\mathcal{\overline{G}}\coloneqq\{\overline{x}_{1},\dots, \overline{x}_{m}\}$ with
	\begin{equation}\label{set_generators_bar}
		\overline{x}_{k} \coloneqq \left\lbrace
		\begin{array}{lll}
		\displaystyle\sum_{r} d_{j}^{r}x_{r} & \mbox{ if $k=i$}
		\\
		x_{k} & \mbox{ otherwise}
		\end{array}
		\right. 
		\, .
	\end{equation}
In other words, we take $\overline{x}_{i}$ to be the differential of $x_{j}$. 
Let $\overline{d}=(\overline{d}_{h}^{k})$ be the matrix of $\delta$ with respect to $\mathcal{\overline{G}}$. 
By the discussion in Paragraph~\ref{point_change_bound_matrix}, $\overline{\mathcal{G}}$ is a quasi-minimal set of generators.
Moreover, $\dg(\overline{x}_{k})=\dg(x_{k})$ for all $k$, and $\ent(\overline{x}_{k})=\ent(x_{k})$ for all $k$ since $\ent(\overline{x}_{i})=\max\{\ent(x_{k})\mid d_{j}^{k}\neq 0\}=\ent(x_{i})$ by \hyperlink{SC2}{\bf SC2}.
	
The second step is to possibly change again the set of generators to $\mathcal{\tilde{G}}\coloneqq\{\tilde{x}_{1},\dots, \tilde{x}_{m}\}$ by putting $\tilde{x}_{k}\coloneqq\overline{x}_{k}$ for $k=j$, and $\tilde{x}_{k}\coloneqq \overline{x}_{k} - \overline{d}_{k}^{i}\overline{x}_j$ otherwise.
$\mathcal{\tilde{G}}$ is still a quasi-minimal set of generators, again by the discussion of Paragraph~\ref{point_change_bound_matrix}, and $\dg(\tilde{x}_{k})=\dg(\overline{x}_{k})$ and $\ent(\tilde{x}_{k})=\ent(\overline{x}_{k})$ for all $k$.
Moreover, by construction, also $(\tilde{x}_{i},\tilde{x}_{j})$ satisfies \hyperlink{SC1}{\bf SC1-2-3}. 
So, claims \textit{\ref{claim_1}} and \textit{\ref{claim_2}} are verified. 

To prove claim \textit{\ref{claim_3}}, 
it is sufficient to show that properties (i--v) are satisfied by taking $x:=\tilde{x}_{i}$ and $y\coloneqq\tilde{x}_{j}$. 
Property (i) follows by \hyperlink{SC1}{\bf SC1-2} of $(\tilde{x}_{i}, \tilde{x}_{j})$. 
Property (ii) holds by construction since $\delta_n(\tilde{x}_{j})=\tilde{x}_i$. 
Finally, properties (iii--v) follow by \hyperlink{SC2}{\bf SC2-3} of $(\tilde{x}_{i},\tilde{x}_{j})$.
	
To prove claim \textit{\ref{claim_4}},
let us examine how the change of generators from $\mathcal{G}$ to $\mathcal{\tilde{G}}$ via $\mathcal{\overline{G}}$ modifies the total boundary matrix (see Paragraph \ref{point_change_bound_matrix}).	
We see that
\begin{align}
	\overline{d}_{j} &=(0,\dots,0,\underset{i\text{-th}}{1}, 0, \dots,0)^{T} \label{column_j}
	\\
 	\overline{d}^{k} &= {d}^{k} - d_{j}^{k}d^{i} \quad \qquad \text{ for } k\neq i \label{row_r} 
    \\
 	\overline{d}_{i} & = 0 \label{column_i_zero}
\end{align}
The equations (\ref{column_j}) and (\ref{row_r}) are induced by the change of generators (\ref{set_generators_bar}), and equation (\ref{column_i_zero}) follows from $\delta^{2}=0$.
Indeed, $\delta^{2}=0$ if and only if $\overline{d}^{2}=0$, which implies $(\overline{d}^{2})^{k}_{j}=0$ for all $k$.
On the other hand, by construction we have $\overline{d}_{j}^{i}=1$ and $\overline{d}_{j}^{k}=0$ for $k\neq i$.
So, $(\overline{d}^{2})^{k}_{j} = 
\sum_{h}\overline{d}_{h}^{k}\overline{d}_{j}^{h} =
\overline{d}_{i}^{k}$ by (\ref{column_j}).
Hence, $\overline{d}_{i}^{k}=0$ for all $k$, and thus the column $\overline{d}_{i}$ is trivial.
	
Next, let $\tilde{d}$ be the matrix associated with $\delta$ with respect to $\mathcal{\tilde{G}}$.
The change of the generators from $\mathcal{\overline{G}}$ to $\mathcal{\tilde{G}}$ affects only the $j$-th row, so that $\tilde{d}^{j}=\overline{d}^{j}+\overline{d}^{i}_{k}\overline{d}^{k}$, and the columns $\tilde{d}_{k}$ with $k\neq j$ and $\overline{d}_{k}^{i}\neq 0$.
In particular, equations (\ref{column_j})-(\ref{column_i_zero}) hold also for $\tilde{d}$, and in addition we have:
\begin{align}
	\tilde{d}^{i} & = (0,\dots, 0, \underset{j\text{-th}}{1}, 0, \dots,0) \label{row_i} 
	\\
	\tilde{d}^{j} & = 0 \label{row_j_zero}
\end{align}
Equality (\ref{row_i}) follows by the choice of the generators.
Indeed, we have $\tilde{d}_{k} = \overline{d}_{k}-\overline{d}_{h}^{i}\overline{d}_{j}$ for $k\neq j$, and
thus, by (\ref{column_j}), $\tilde{d}_{k}^{i} = \overline{d}_{k}^{i}-\overline{d}_{k}^{i}\overline{d}_{j}^{i} = \overline{d}_{k}^{i}-\overline{d}_{k}^{i}=0$  for all $k\neq j$.
Equality (\ref{row_j_zero}) follows by similar arguments as in the proof of (\ref{column_i_zero}).
Claim \textit{\ref{claim_4}} now follows by observing that the total boundary matrix with respect to $\mathcal{G'}=\tilde{\mathcal{G}}\setminus\{\tilde{x}_i,\tilde{x}_{j}\}$ corresponds to $\tilde{d}$ with the $i$-th row and column and the $j$-th row and column deleted.
\end{proof}

We stress that in the proof we relied on the assumption $\delta^{2}=0$.
Without this assumption, the $i$-th column and the $j$-th row need not be zero and thus cannot be directly deleted.
This is why we cannot generalise such an argument to the decomposition of two-parameter persistence modules.

Moreover, we remark that, in actual computations, it is sufficient to perform only the row reduction to determine the boundary matrix of the complex $X'$ that remains after the splitting. 
Indeed, the column reduction is necessary for the proof of the correctness of the procedure but does not affect the retrieving of the boundary of $X'$ (see \cref{S:example}).
\medskip

If a generator is a cycle and does not contribute to the differential, its corresponding interval sphere can be directly split.

\begin{proposition}\label{prop_split_trivial}
	Let $X$ be a filtered chain complex and ${\mathcal G}=\{x_1,x_2,\ldots, x_{m}\}$ an ordered quasi-minimal set of generators of $X$.
	Let $d$ be the boundary matrix associated with $\delta$ with respect to ${\mathcal G}$.
	For each  generator $x_{i}$ in $\mathcal{G}$ such that \text{(a)} $d^{i}=0$, and \text{(b)} $d_{i}=0$, it holds that:
	\begin{enumerate}
		\item $X$ is isomorphic to $\intS{s,\infty}\oplus X'$ for $n=\deg(x_{i})$, with $s=\ent(x_i)$, and $X'$ is the filtered chain complex whose set of generators is $\mathcal{G'}=\mathcal{G}\setminus\{x_{i}\}$, with same degrees and entrance times of $\mathcal{G}$, and differential induced by $\delta$;
		\item The $(m-1)\times(m-1)$ total boundary matrix $d'$ of $X'$ with respect to $\mathcal{G'}$ is obtained from the $m\times m$ total boundary matrix $d$ of $X$ with respect to $\mathcal{G}$ by removing its $i$-th row and column.
	\end{enumerate} 
\end{proposition}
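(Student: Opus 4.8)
The plan is to verify the five sufficient conditions (i--v) from \cite[Thm.\ 4.2]{bcw} recalled in the proof of \cref{prop_split}, this time for the element $x_i$ in the role of both the relevant cycle and (via the degenerate interval $[s,s]$) the ``capping'' generator. Since $d^i=0$ means $\delta_{\dg(x_i)}(x_i)=0$, condition (i) holds immediately. Because $\mathcal G$ is quasi-minimal, $x_i$ represents a basis element of $(X_n/\mathrm{rad}X_n)^{s}$ with $s=\ent(x_i)$, so $x_i\notin\Img(X_n^{t<s})$, giving condition (iii). For the degenerate interval $\intS{s,\infty}$ one uses the convention $\intS[-1]{s,s}=\intS[0]{s,\infty}$ from Paragraph~\ref{point_generators}; here the generator $y$ plays no role (or equivalently the disk $D^{n+1}$ part never appears before $e=\infty$), so conditions (ii), (iv), (v) are either vacuous or reduce to statements about $x_i$ alone. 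The key input is condition (b), $d_i=0$: this says no generator's differential involves $x_i$, i.e.\ $x_i\notin\Img(\delta_{n+1}^t)$ for every $t$, which is precisely what (v) demands for all $t<e=\infty$, and it also guarantees that quotienting out $x_i$ leaves a genuine sub-chain-complex structure. I would phrase this by noting that $X\cong \Sp[n]\text{-shaped interval}\oplus X'$ where the interval sphere contributes $\mathbb F$ in degree $n$ from time $s$ onward with trivial differential, matching exactly the definition of $\intS{s,\infty}$ (equivalently $\intS[-1]{s,s}$).

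For claim 2 (the matrix statement), I would argue directly rather than through a change of generators: since both $d^i=0$ and $d_i=0$, the total boundary matrix $d$ already has the $i$-th row and $i$-th column entirely zero, so it is literally block-diagonal with one block being the $1\times 1$ zero matrix indexed by $x_i$ and the other block being the $(m-1)\times(m-1)$ matrix on $\mathcal G\setminus\{x_i\}$. One then checks that this second block is exactly the total boundary matrix of $X'$ with respect to $\mathcal G'$, which is immediate because the differential $\delta'$ on $X'$ is induced by $\delta$ and $\mathcal G'$ inherits the same degrees and entrance times (so that $\mathcal G'$ is quasi-minimal for $X'$ by the same radical count as in Paragraph~\ref{point_change_bound_matrix}, one generator in each (degree, entrance-time) slot fewer, but only in the slot of $x_i$, and there it was contributing a basis vector of $(X_n/\mathrm{rad}X_n)^s$ that is split off with the summand). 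No Gaussian elimination is needed, in contrast with \cref{prop_split}.

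The main obstacle, as usual with these splitting lemmas, is bookkeeping the degenerate interval sphere correctly: one must make sure that $\intS{s,\infty}$ really is the summand (so $H_n$ of it is the half-open interval module $\mathbb I_{[s,\infty)}$, consistent with \cref{prop_int_sphere_int_module}), and that conditions (ii), (iv), (v) from the \cite{bcw} criterion are invoked in a form that makes sense when $e=\infty$ and there is no degree-$(n+1)$ generator at all. The cleanest way around this is to observe that $\intS{s,\infty}$ is itself a filtered chain complex with trivial differential whose only generator is a single $\Sp[n]$-type class entering at time $s$, and then to exhibit the splitting $X\cong \intS{s,\infty}\oplus X'$ by hand: define $X'$ degreewise as the complement of the line spanned by the image of $x_i$ under all transitions $X_n^{s\le t}$, check this is transition-compatible (using (iii) below time $s$) and differential-compatible (using $d^i=0$ for $\delta$ on $x_i$, and $d_i=0$ so that $\delta$ of anything else never lands on the $x_i$-line), and conclude. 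Once the splitting is in place, both claims follow, and the matrix claim is then a one-line consequence of the row and column of index $i$ being zero.
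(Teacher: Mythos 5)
Your argument is correct, but it reaches the splitting by a different route than the paper. The paper does not go back to the five conditions (i--v) at all: it writes down the short exact sequence $0\to \intS{s,\infty}\xrightarrow{\ \iota\ } X\to X/\Img(\iota)\to 0$ with $\iota(1)=x_i$, observes that hypotheses (a) and (b) say exactly that $x_i$ is a cycle not lying in $\Img(\delta)$, deduces from this the existence of a retract $\rho\colon X\to\intS{s,\infty}$, and invokes the characterisation of split exact sequences \cite[Prop.~4.3]{maclane_homology}; the matrix claim is then, as in your write-up, immediate from $d^i=d_i=0$. You instead build the complementary summand by hand as a subcomplex of $X$. Both work; your instinct that (i--v) do not transfer cleanly to $e=\infty$ is sound and is presumably why the paper switches to the retract argument here, which gets $X'$ for free as a quotient. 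Your construction has the mild advantage of being self-contained (no appeal to MacLane), at the cost of one point you should make explicit: ``the complement of the line spanned by $x_i$'' is not canonical, and an arbitrary complement need not be $\delta$-invariant or filtration-compatible. You must take specifically $\operatorname{span}(\mathcal G\setminus\{x_i\})$ in each degree and filtration level; then $\delta$-invariance is exactly the statement that row $i$ vanishes, and the $x_i$-line is itself a subcomplex exactly because column $i$ vanishes. Relatedly, under the paper's stated convention ($d^i_j$ is the coefficient of $x_i$ in $\delta(x_j)$, so columns encode differentials), your identifications are swapped: $d_i=0$ is the condition $\delta(x_i)=0$, and $d^i=0$ is the condition that no $\delta(x_j)$ involves $x_i$. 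Since both hypotheses are assumed, this does not affect the validity of the proof, but the roles the two conditions play in your compatibility checks should be interchanged.
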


\begin{proof}
	Consider the following exact sequence:
	\begin{equation}\label{exact_sequence}
	\begin{tikzcd}[column sep=0.5cm]
	0 \ar[r]
	& \intS{s,\infty}\ar[r, "\iota"]
	& X\ar[r]
	& X'\cong \faktor{X}{\Img(\iota)}\ar[r]
	& 0
	\end{tikzcd}
	\end{equation}
	where $\iota$ is such that $1\in (\intS{s,\infty})_s^n \mapsto x_{i}$, with $s=\ent(x_i)$.
	The hypotheses \textit{(a)} and \textit{(b)} are equivalent to fact that the differential of $x_{i}$ is trivial and $x_{i}\not\in \Img(\delta)$. 
	Thus, they guarantee the existence of a retract $\rho\colon X\to \intS{s,\infty}$.
	We can then apply the characterisation of split exact sequences \cite[Prop.\ 4.3]{maclane_homology} to (\ref{exact_sequence}) and conclude that $X\cong\intS{s,\infty}\oplus X'$, proving the first claim.
	The second claim follows immediately since $d^{i}=d_{i}=0$ by hypothesis.
\end{proof}

We are now ready to present the pseudo-code for the decomposition of filtered chain complexes. 
Algorithm~\ref{algorithm_decomposition_pickpair} takes as input an ordered quasi-minimal set of generators of $X$ and returns as output the list of interval sphere direct summands of $X$. 

\begin{center}
\begin{algorithm}[H]\label{algorithm_decomposition_pickpair}
	\caption{Interval sphere decomposition of filtered chain complexes} 
	\DontPrintSemicolon
	\KwIn{Totally ordered quasi-minimal set of generators $\mathcal{G}$ of $X$, labelled by degree and entrance time}
	\KwOut{List of interval spheres}
	List $= \emptyset$ \\
	$d = $ \texttt{BOUNDARY}$(\mathcal{G})$ \label{line_boundary}
	\\
	\While{$\exists \ i $ such that $d^{i}\neq 0$}{
		$i,j =$ \hyperlink{PAIR}{\texttt{PAIR}}$(d, i)$ \\
		Append $\intS[\dg(i)]{\ent(i),\ent(j)}$ to List \\
		\textnormal{\hyperlink{SPLIT}{\texttt{SPLIT}}$(i,j,d)$} 
	}
	\For{all indices $i$ of remaining rows in $d$}{
		Append $\intS[\dg(i)]{\ent(i),\infty}$ to List
	}
	Return List
\end{algorithm}
\end{center}

Algorithm~\ref{algorithm_decomposition_pickpair} hinges on three methods: \texttt{BOUNDARY}, \hyperlink{SPLIT}{\texttt{SPLIT}}, and \hyperlink{PAIR}{\texttt{PAIR}}. 
\texttt{BOUNDARY} generates the boundary matrix $d$ from a minimal set of generators and is not discussed here as it depends on the data structure at hand. 
The boundary matrix $d$ is then a global variable and will be updated at each call of \hyperlink{SPLIT}{\texttt{SPLIT}}. 
\hyperlink{SPLIT}{\texttt{SPLIT}} performs the splitting by reducing the boundary matrix, as stated in \cref{prop_split}(\textit{4}).
\hyperlink{PAIR}{\texttt{PAIR}} selects the pair of generators to be split following the constructive proof of \cref{prop_existence_xi_xj}, which guarantees that the while loop eventually terminates.
We underline that \hyperlink{PAIR}{\texttt{PAIR}} does not need columns and rows of $d$ to be sorted by an a-priori-chosen order, differently to what happens for usual persistence algorithms. 
For actual implementation, the most efficient approach is likely to order the generators as in the SPA (see \cref{sec_standard_alg}). 
Indeed, with that order, the method \hyperlink{PAIR}{\texttt{PAIR}} is much more efficient than the pseudo-code we present below. 
However, from a theoretical point of view, we show that we can reduce the boundary matrices and obtain the barcode decomposition without an a-priori-fixed order.
Finally, we note that \cref{prop_existence_xi_xj}, and consequently the \hyperlink{PAIR}{\texttt{PAIR}} method, would also work by exchanging the role of rows and columns, i.e.\ by first picking an arbitrary column $j$, and then looking for a row $i$ to pair it with. 

\begin{center}
	\begin{method}[H]\label{split_algorithm}
	\hypertarget{SPLIT}{}
		\caption{{\small \texttt{SPLIT}$(i,j, d)$}} 
		\DontPrintSemicolon
		\KwIn{Boundary matrix $d$, indices $i$, $j$}
		\KwOut{Updated boundary matrix $d$}
		\For{$k$ with $d_{j}^{k}\neq0$}{
			Add to the $k$-th row the $i$-th row multiplied by $-({d_{j}^{i})^{-1}d_{j}^{k}}$ \label{add_i_row_to_k_row_alg_0}
		}
		Delete row $j$ from $d$ \label{delete_row_j} \\
		Delete row $i$ and column $j$ from $d$ \label{delete_col_j_row_i} \\
		Delete column $i$ from $d$ \label{delete_col_i}		
	\end{method}
\end{center}

\begin{center}
	\begin{method}[H]\label{algorithm_pick&pair}
	\hypertarget{PAIR}{}
		\caption{\texttt{PAIR}$(d, i)$} 
		\DontPrintSemicolon 
		\KwIn{Boundary matrix $d$, index $i$ of a non-zero row of $d$}
		\KwOut{$i,j$ satisfying \hyperlink{SC1}{\bf SC1-2-3}}
		Pick $j\in \{ \argmin_{h \mid d_{h}^{i}\neq 0} \ent(h)\}$ \label{pair_initial_pick} \\
		\While{\textsc{True}\label{line_pair_until}}{\label{line_pair_1}
		    \If{$i\in \{ \argmax_{h \mid d_{j}^{h}\neq 0} \ent(h)\}$ \label{pair_row_condition}}{
		    Return $i,j$
		    }
		    \Else{
		    Pick $k\in \{ \argmax_{h \mid d_{j}^{h}\neq 0} \ent(h)\}$ \\
		    $i = k$
		    }
			\If{$j\in \{ \argmin_{h \mid d_{h}^{i}\neq 0} \ent(h)\}$}{
		    Return $i,j$
		    }
		    \Else{
		    Pick $k\in \{ \argmin_{h \mid d_{h}^{i}\neq 0} \ent(h)\}$ \\
		    $j = k$
		    }
		}
	\end{method}
\end{center}

In conclusion, \cref{prop_split} and \cref{prop_split_trivial} guarantee the overall correctness of Algorithm~\ref{algorithm_decomposition_pickpair} to obtain the decomposition of a filtered chain complex, and hence, as a by-product, the barcode decomposition of a one-parameter persistence module. 

\subsection{Examples}\label{S:example}

To illustrate the above methods, we provide two running examples built on the Vietoris--Rips complexes of two point clouds. 
Consider the $l_{\infty}$ metric on the following two spaces: $(a)$ four points on a line and $(b)$ four points in a grid. 
We call these points $x,y,z,w$ as illustrated below:

\begin{figure}[ht!]
    \centering
    \subfloat[Four points along a line]{
    \begin{tikzpicture}[scale=0.45,y=3cm, x=3cm,>=latex',font=\sffamily]
    \draw[-,thick] (-0.1,0.75) -- coordinate (x axis mid) (2.6,0.75);
    \draw[-,white,thick] (-0.3,-0.2) -- coordinate (x axis mid) (2.6,-0.2);
    \foreach \coor in {0.5, 1, 1.5, 2}{
    \fill[Bnavy] (\coor,0.75) circle (6.5pt);}
    \node at (0.5,1) {x};
    \node at (1,1) {y};
    \node at (1.5,1) {z};
    \node at (2,1) {w};
    \end{tikzpicture}
    \label{F_line_points}} 
    $\quad$
    \subfloat[Four points in a grid]{%
    \begin{tikzpicture}[scale=0.45,y=3cm, x=3cm,>=latex',font=\sffamily]
    \draw[-,thick] (-0.1,0.5) -- coordinate (x axis mid) (1.6,0.5);
    \draw[-,thick] (-0.1,1) -- coordinate (x axis mid) (1.6,1);
    \draw[-,thick] (0.5,-0.1) -- coordinate (x axis mid) (0.5,1.6);
    \draw[-,thick] (1,-0.1) -- coordinate (x axis mid) (1,1.6);
    \draw[-,white,thick] (-1,-0.2) -- coordinate (x axis mid) (-1,1.6);
    \foreach \coordx in {0.5, 1}{
	\foreach \coordy in {0.5, 1}{
	\fill[Bnavy] (\coordx,\coordy) circle (6.5pt);}}
	\node at (0.3,0.3) {x};
    \node at (0.3,1.2) {y};
    \node at (1.2,1.2) {z};
    \node at (1.2,0.3) {w};
    \end{tikzpicture}
    \label{F_grid_points}}
\end{figure}

The corresponding distance matrices are as follows (hereafter, red colour is used for the case $(a)$ and blue colour for $(b)$):

\[
D^{\textcolor{red}{a}}= 
\begin{blockarray}{ccccc} 
& x & y & z & w \\
\begin{block}{c[cccc]}
x & 0 & 1 & 2 & 3 \\
y & 1 & 0 & 1 & 2 \\
z & 2 & 1 & 0 & 1 \\
w & 3 & 2 & 1 & 0 \\
\end{block}
\end{blockarray}
,
\quad 
D^{\textcolor{blue}{b}} =
\begin{blockarray}{ccccc} 
& x & y & z & w \\
\begin{block}{c[cccc]}
x & 0 & 1 & 1 & 1 \\
y & 1 & 0 & 1 & 1 \\
z & 1 & 1 & 0 & 1 \\
w & 1 & 1 & 1 & 0 \\
\end{block}
\end{blockarray}
\, .
\]

We illustrate the decomposition Algorithm \ref{algorithm_decomposition_pickpair} for the filtered chain complexes whose quasi-minimal sets of generators are given by the simplices of the Vietoris--Rips complexes on the two finite metric spaces given by these two matrices. 
First, we find a pair of generators to split using \hyperlink{PAIR}{\texttt{PAIR}}, and then we prune them off according to \hyperlink{SPLIT}{\texttt{SPLIT}}. 
As a field, we choose to work in  $\mathbb{Z}/2\mathbb{Z}$.
For simplicity, we will not display the total boundary matrices but only the block of degree 2. 
As a consequence, we will not perform Line \ref{delete_row_j} and Line \ref{delete_col_i} of Method \ref{split_algorithm} and we will shrink the matrix by only one column and one row.

As by Line \ref{line_boundary} of Algorithm \ref{algorithm_decomposition_pickpair}, we first build the boundary matrices $\partial_2^{\textcolor{red}{a}}$ and $\partial_2^{\textcolor{blue}{b}}$, whose columns correspond to the generators of degree $2$ and whose rows correspond to generators of degree $1$.
Observe that, since by \cref{prop_split} we can use any total order, we can pick the lexicographic order for the simplices so that the matrices have the same elements (i.e.\ $\partial_2^{\textcolor{red}{a}}=\partial_2^{\textcolor{blue}{b}}$ --- even if the labels of the rows and columns given by the entrance times will differ.
In the following matrix, the most-external labels below denote the simplices, and the innermost labels denote their entrance times.

\[
\partial_2^{\textcolor{red}{a}} \ =
\begin{blockarray}{ccccccc} 
&& xyz & xyw & xzw & yzw \\
&& \textcolor{blue}{1} & \textcolor{blue}{1} & \textcolor{blue}{1} &  \textcolor{blue}{1} & \text{\textcolor{blue}{Grid}} \\
\begin{block}{cc(cccc)c}
xy & \textcolor{red}{1} & 1 & 1 & 0 & 0 & \textcolor{blue}{1} \\
xz & \textcolor{red}{2} & 1 & 0 & 1 & 0 & \textcolor{blue}{1} \\
xw & \textcolor{red}{3} & 0 & 1 & 1 & 0 & \textcolor{blue}{1} \\
yz & \textcolor{red}{1} & 1 & 0 & 0 & 1 & \textcolor{blue}{1} \\
yw & \textcolor{red}{2} & 0 & 1 & 0 & 1 & \textcolor{blue}{1} \\
zw & \textcolor{red}{1} & 0 & 0 & 1 & 1 & \textcolor{blue}{1} \\
\end{block}
&\text{\textcolor{red}{Line}} & \textcolor{red}{2} & \textcolor{red}{3} & \textcolor{red}{3} & \textcolor{red}{2} &
\end{blockarray}
= \ \partial_2^{\textcolor{blue}{b}}
\]

Let us now use \hyperlink{PAIR}{\texttt{PAIR}} to find a row and a column to split. 
As required, we begin with a non-zero row: we choose the top one.
We now need to look at the entrance time labels of the non-zero elements in row $xy$ and choose one of the smallest ones (Line \ref{pair_initial_pick} of \hyperlink{PAIR}{\texttt{PAIR}}). 
We pick column $xyz$ for both matrices. 
Next, we need to check if row $xy$ has one of the greatest entrance times among the rows of the non-zero elements in column $xyz$. 
This is true for $(b)$ but not for $(a)$, as the one with the greatest entrance time is row $xz$. 
Thus, we exit the loop according to Line \ref{pair_row_condition} of \hyperlink{PAIR}{\texttt{PAIR}} with the generators $(xy,xyz)$ for $(b)$ and we repeat the procedure for $(a)$. 
Taking $xz$ in place of $xy$, we see that condition of Line \ref{pair_row_condition} of \hyperlink{PAIR}{\texttt{PAIR}} is met by the pair of simplices $(xz,xyz)$ for $(a)$.
Thus, the routine \hyperlink{PAIR}{\texttt{PAIR}} ends, and we have the generators of the interval sphere to be split also for $(a)$, namely $xz$ and $xyz$.

Next, we apply the \hyperlink{SPLIT}{\texttt{SPLIT}} method. 
This means that we first need to add row $xz$ to rows $xy$ and $yz$ in $\partial_2^a$, and row $xy$ to rows $xz$ and $yz$ in $\partial_2^b$ (Line \ref{add_i_row_to_k_row_alg_0} of Method \ref{split_algorithm}).
The matrices after these operations turns out to be:

\[
\partial_2^{\textcolor{red}{a}} \ =
\begin{blockarray}{cccccc} 
&& xyz & xyw & xzw & yzw \\
\begin{block}{cc(cccc)}
xy & \textcolor{red}{1} & 0 & 1 & 1 & 0 \\
xz & \textcolor{red}{2} & 1 & 0 & 1 & 0 \\
xw & \textcolor{red}{3} & 0 & 1 & 1 & 0 \\
yz & \textcolor{red}{1} & 0 & 0 & 1 & 1 \\
yw & \textcolor{red}{2} & 0 & 1 & 0 & 1 \\
zw & \textcolor{red}{1} & 0 & 0 & 1 & 1 \\
\end{block}
&\text{\textcolor{red}{Line}} & \textcolor{red}{2} & \textcolor{red}{3} & \textcolor{red}{3} & \textcolor{red}{2} 
\end{blockarray}
\quad
\partial_2^{\textcolor{blue}{b}} \ =
\begin{blockarray}{cccccc} 
& xyz & xyw & xzw & yzw \\
& \textcolor{blue}{1} & \textcolor{blue}{1} & \textcolor{blue}{1} &  \textcolor{blue}{1} & \text{\textcolor{blue}{Grid}} \\
\begin{block}{c(cccc)c}
xy & 1 & 1 & 0 & 0 & \textcolor{blue}{1} \\
xz & 0 & 1 & 1 & 0 & \textcolor{blue}{1} \\
xw & 0 & 1 & 1 & 0 & \textcolor{blue}{1} \\
yz & 0 & 1 & 0 & 1 & \textcolor{blue}{1} \\
yw & 0 & 1 & 0 & 1 & \textcolor{blue}{1} \\
zw & 0 & 0 & 1 & 1 & \textcolor{blue}{1} \\
\end{block}
\end{blockarray}
\]

At this point, we are in the position to remove row $xz$, resp. $xy$, and column $xyz$ from $\partial_2^a$, resp. $\partial_2^b$, thus shrinking the boundary matrices (Line \ref{delete_col_j_row_i} of \hyperlink{SPLIT}{\texttt{SPLIT}}). 
Finally, we obtain the interval sphere $\intS[1]{1,1}$ for the grid point-cloud and $\intS[1]{2,2}$ for the line one. 
Indeed, these are the entrance time labels for row $xz$, resp. $xy$, and column $xyz$ in the grid case (resp. the line case). 
Note that both these interval spheres have zero homology: their difference goes undetected at the homological level. 
The degree $2$ boundary matrices now are:

\[
\partial_2^{\textcolor{red}{a}}\  =
\begin{blockarray}{ccccc} 
&&  xyw & xzw & yzw \\
\begin{block}{cc(ccc)}
xy & \textcolor{red}{1} & 1 & 1 & 0 \\
xw & \textcolor{red}{3} & 1 & 1 & 0 \\
yz & \textcolor{red}{1} & 0 & 1 & 1 \\
yw & \textcolor{red}{2} & 1 & 0 & 1 \\
zw & \textcolor{red}{1} & 0 & 1 & 1 \\
\end{block}
&\text{\textcolor{red}{Line}} & \textcolor{red}{3} & \textcolor{red}{3} & \textcolor{red}{2}
\end{blockarray}
\qquad
\partial_2^{\textcolor{blue}{b}} \  =
\begin{blockarray}{ccccc} 
&  xyw & xzw & yzw \\
& \textcolor{blue}{1} & \textcolor{blue}{1} &  \textcolor{blue}{1} & \text{\textcolor{blue}{Grid}} \\
\begin{block}{c(ccc)c}
xy & 1 & 1 & 0 & \textcolor{blue}{1} \\
xw & 1 & 1 & 0 & \textcolor{blue}{1} \\
yz & 1 & 0 & 1 & \textcolor{blue}{1} \\
yw & 1 & 0 & 1 & \textcolor{blue}{1} \\
zw & 0 & 1 & 1 & \textcolor{blue}{1} \\
\end{block}
\end{blockarray}
\]
and we can look for the next pair to split. 
After completely running Algorithm \ref{algorithm_decomposition_pickpair}, also in the other degrees, the decomposition of the filtered chain complexes turns out to be 
\[
\text{(a)} \ 
\bigoplus_{i=1}^{3}\intS[0]{0,1} \oplus \bigoplus_{i=1}^{2} \intS[1]{2,2} \oplus \intS[1]{3,3} \oplus \intS[2]{3,3} \, , \quad
\text{(b)} \ 
\bigoplus_{i=1}^{3}\intS[0]{0,1} \oplus \bigoplus_{i=1}^{3}\intS[1]{1,1} \oplus \intS[2]{1,1} \, .
\]

\subsection{Filtered kernels}

In the previous section, we applied our algorithm on filtered Vietoris--Rips complexes. 
However Algorithm~\ref{algorithm_decomposition_pickpair} is more general and can be applied to arbitrary filtered chain complexes. 
To convey the idea, in this section, we illustrate one such application. 
We consider the kernels of morphisms between filtered chain complexes, which we call \define{filtered kernels}.
Note that the homology of filtered kernels is generally different from kernel persistence \cite{ker-img-coker,morozov-thesis, casas2020distributing} as $\ker H f \not\cong H \ker f$ in general.
 
\begin{point}\label{P:simplicial_map}{\em Simplicial maps and induced morphisms of filtered chain complexes.}
Let $f\colon K \to L$ be a simplicial map, and assume filtrations on $K$ and $L$, respectively, are given. 
To obtain quasi-minimal set of generators, as in Paragraph \ref{point_generators}, we label each simplex $\sigma$ with a degree $\deg(\sigma)$ (given by its simplicial dimension) and an entrance time $\ent(\sigma)$ (given by the filtration). 
Since $f$ is simplicial, for every simplex $\sigma \in K$, we have $\ent(\sigma) \geq \ent(f(\sigma))$ and $\deg(\sigma) \geq \deg(f(\sigma))$. 
We can now take $X$ and $Y$ to be the two filtered chain complexes induced by the given filtrations on $K$ and $L$, respectively. 
Then the map $f$ induces a morphism of filtered chain complexes, called again $f$, defined by 
\begin{equation*}
    f_{n}^{t} \coloneqq
	\begin{cases*}
	X^{\ent(f(\sigma))\leq \ent(\sigma)}(f(\sigma))  & if  $\dim f(\sigma) = \dim (\sigma) = n$ \\
	0           & otherwise
	\end{cases*} \, .
\end{equation*}
The filtered chain complex given by the kernel of $f$, hence called filtered kernel and denoted by $\ker f$, is defined by $(\ker f)_n^t \coloneqq \ker f_n^t$ for all $n\in\mathbb{N}$ and $t\in[0,\infty)$, with the transition morphisms and boundary maps being those induced by $X$. 
Note that we can analogously define the filtered image $\Img f$ as $(\Img f)_n^t \coloneqq \Img f_n^t$ and the transition morphisms and boundary maps induced by $Y$.
As for filtered kernels, this is in general different from image persistence \cite{ker-img-coker, morozov-thesis, casas2020distributing}. 
\end{point}

Our goal is to use  quasi-minimal sets of generators of $K$ and $L$, precisely their simplices, to describe a quasi-minimal set of generators of the filtered kernel of $f$, in the case when $f$ admits a section. 
This way, Algorithm~\ref{algorithm_decomposition_pickpair} can be applied to compute a decomposition of $\ker f$.

\begin{point}{\em Quasi-minimal set of generators for $\ker f$.}
We now fix a simplicial map $f\colon K\to L$ between two filtered simplicial complexes admitting a section $s \colon L \to K$ (see \cite{section} for conditions that guarantee this requirement). 
Note that, for such $s$, for every $\tau \in L$, $\ent(\tau)=\ent(s(\tau))$ and $\deg(\tau)=\deg(s(\tau))$.
The section $s$ induces a morphism of filtered chain complexes denoted again by $s$ (Paragraph \ref{P:simplicial_map}). 
We now define a quasi-minimal set of generators for $\ker f$ (see Paragraph \ref{point_generators}): for every $n\in\mathbb{N}$ and $t \in [0,\infty)$, a base of $\ker f_n^t$ is given by
\begin{equation*}
    V = \{ \sigma - s f(\sigma) \ \vert \ \deg(\sigma) = n, \ \ent(\sigma)=t \text{ and }  \sigma \not\in\Img(s) \} \, .
\end{equation*}
Indeed, since $f$ is an epimorphism and $s$ its section, we have that the dimension of the space generated by $V$ is equal to
\[ 
\dim \text{span}(V) = \dim X_n^t - \dim(\Img s_n^t) = \dim X_n^t - \dim(\Img f_n^t) = \dim(\ker f_n^t) \, ,
\]
where the first equality holds because $\sigma \not\in \Img(s)$ implies $\sigma\neq sf(\sigma)$.
Thus, it is left to prove that the elements in $V$ are linearly independent as vectors. 
But this follows directly from the fact that each element is given by the difference between $\sigma \not\in \Img(s)$ and something which is in the image of $s$. 
\end{point}

We can thus easily compute the quasi-minimal set of generators of $\ker f$ from the quasi-minimal sets of generators of $K$ and $L$, once a section $s$ is known. 
Next, we show that retrieving the boundary matrix of $\ker f$ requires almost no computation.

\begin{point}{\em Boundary matrix of $\ker f$.} 
To retrieve the boundary matrix of $\ker f$, we denote by $\ker f_n$ the parametrised vector space generated by all the elements in $\ker f$ with degree $n$. 
Consider a quasi-minimal generator $\sigma-sf(\sigma)$, with $\sigma \in K$. 
Assume the boundary of $\sigma$ in $X$ is $\sum_{i=0}^{n}(-1)^i \tau_i$. 
Then the boundary of $\sigma - sf(\sigma)$ is given by:
\begin{align*}
    \delta(\sigma - sf(\sigma)) & = \delta\sigma - sf(\delta\sigma) =
    \sum_{i=0}^{n} (-1)^i\tau_i - sf(\sum_{i=0}^{n}(-1)^i\tau_i) = \sum_{i=0}^{n}(-1)^i\left(\tau_i -sf(\tau_i)\right) \, ,
\end{align*}
where $s$ and $f$ are considered as morphisms of filtered chain complexes (Paragraph \ref{P:simplicial_map}).
Since the elements of the form $\left(\tau_i -sf(\tau_i)\right)$ either belong to a basis of $\ker f_{n-1}$ (if $\tau_i \neq sf(\tau_i)$) or are equal to zero, the boundary matrix $\partial$ of $\ker f$ is obtained from that of $X$ removing the columns and rows corresponding to simplices $\sigma \in K$ for which $\sigma = sf(\sigma)$. 
\end{point}

In conclusion, with our decomposition algorithm, we can efficiently decompose $\ker f$, which is a filtered chain complex that does not come from a simplicial complex, showing that the algorithm can be applied in more general scenarios.

\section{The structural geometry behind the decomposition}\label{sec_alg_analysis}

One of the contributions of our algorithm is to highlight the geometry behind the SPA (see \cref{sec_standard_alg}). 
Indeed, this algorithm, together with its variants such as
\cite{ripser_paper,clearcompress,phat_paper, dualities, Edel-Olsb}, works at the chain complex level, but it is usually described at the persistence module level.

\begin{point}\label{point_standard}
{\em Relation with persistence algorithms.} 
Each interval sphere $\intS{s,e}$ has two generators, namely the one of the sphere $\Sp[n]$ at step $s$ and one of the disk $\D[n+1]$ capping such sphere at step $e$. 
These generators form the pair that in the SPA is given by the pivot pairing.
As explained in Paragraph \ref{relation_int_sphere_int_modules}, by forgetting (resp. retaining) the interval spheres whose generators have the same entrance time, i.e. the contractible parts, we can retrieve the interval modules (resp. spheres) decomposition from the interval spheres (resp. modules) decomposition. 
Thus, our algorithm explains the geometry behind the SPA, as we explicitly split the corresponding spheres and disks.

Moreover, we can now see that the clear and compress optimisations are implicit methods to quotient out both the generators: in the former the identification of the generator of $\D[n+1]$ allows us to skip the reduction of the one of $\Sp[n]$, while in the latter the opposite happens. 
In our algorithm, this is explicit: since the two generators generate the same indecomposable, we have to remove both in the decomposition.
We can read the compress and clear strategy in our pseudo-code at Line \ref{delete_row_j} and at Line \ref{delete_col_i} of Method~\ref{split_algorithm}, respectively.
Thus, our method provides a geometric explanation of the optimisations at the chain complex level: rather than mere strategic optimisations, they are structural.
\end{point}

\begin{point}\label{point_rowvscolumn}
	{\em Row versus column reductions.}
	The SPA computes the barcode by reducing the boundary matrix by column operations, thus performing operations on the chains. 
	On the other hand, the authors of \cite{dualities} obtain the barcode by reducing (still by column operations) the coboundary matrix, thus performing operations on the cochains. 
	According to \cite{phat_paper}, the use of cohomology with respect to homology turns out to be particularly convenient, when combined with the clear strategy, in the computation of the barcode of Vietoris--Rips complexes. 

    To properly dualise the optimisation obtained in cohomology by the clear strategy, the boundary matrix has to be reduced via row rather than column operations \cite{notes_pivot}. 
	This is precisely what Algorithm~\ref{algorithm_decomposition_pickpair} does.
	In doing so, when generators are primarily sorted by degree, the combination of the row reductions with the compress strategy achieves in homology the same advantages as in cohomology with the clear, for Vietoris--Rips complexes. 
     
	In \cite{ripser_paper}, an explicit calculation quantifies the computational speed-up that the column reduction with clear on the coboundary matrix provides in the barcode computation of Vietoris--Rips complexes. 
	We now display the analogous computation for row operations in homology.
	We underline that what is computed is the number of iterations. 
	Each iteration has to be multiplied by the reduction cost, which depends on the algorithm used. 
 
	Let $X$ be a filtered Vietoris--Rips complex, whose quasi-minimal set of generators is ordered first by degree and then by a total order of choice.
	Let $v$ be the number of generators in degree $0$, i.e. the number of vertices,  $N$ the maximal non-trivial degree of $X$, and $d_{n}$, for $1\leq n\leq N+1$, the blocks of the boundary matrix $d$ of $X$.
	To reduce $d$ without compress, we need to process one row per generator from degree $0$ to $N$.
	In total, this sums up to
	\begin{equation}\label{eq_number_iteration_tot}
	\displaystyle\sum_{n=0}^{N} \underbrace{\binom{v}{n+1}}_{\dim(C_{n}X)} =
	\displaystyle\sum_{n=0}^{N} \underbrace{\binom{v-1}{n}}_{\dim(B_{n-1}X)}+
	\displaystyle\sum_{n=0}^{N} \underbrace{\binom{v-1}{n+1}}_{\dim(Z_{n}X)}
	\, ,
	\end{equation}
	where the equation follows by the fact that the final step of a Vietoris--Rips filtration is a full-complex, the definition of homology, and the rank-nullity theorem, see also \cite[Sec. 3.3]{ripser_paper}.
	Note that $\dim(B_{n-1}X)$ is the number of negative generators of degree $n$.
	Applying the compress strategy, since the negative rows of $d_n$, for $n>0$, are removed when processing $d_{n-1}$, (\ref{eq_number_iteration_tot}) decreases to 
	\begin{equation*}\label{eq_number_row_red_clear}
	\underbrace{\binom{v-1}{0}}_{\dim(B_{-1}X)} +
	\displaystyle\sum_{n=0}^{N} \underbrace{\binom{v-1}{n+1}}_{\dim(Z_{n}X)}
	= 
	1 + \displaystyle\sum_{n=0}^{N}\binom{v-1}{n+1}
	= 
	1 + \displaystyle\sum_{n=1}^{N+1}\binom{v-1}{n}  
	\end{equation*}
	that coincides with the number of generators processed in \cite{ripser_paper}.
\end{point}

\begin{point}\label{remark_apparent_pairs}
	{\em Apparent and emergent pairs.} 
	According to \cite{ripser_paper}, in the SPA where rows and columns of the boundary matrix are ordered first by entrance time and then by degree, the pairs of indices $(i,j)$ such that the lowest non-zero element of the $j$-th column is in the $i$-th row, and the leftmost non-zero element of the $i$-th row is in the $j$-th column, are called \define{apparent pairs} (see \cite[Sec.\ 3.6]{ripser_paper} for an overview of the use of apparent pairs in the computation of persistent homology). 
	Thus, the apparent pairs are precisely given by pairs of generators $(x_{i},x_{j})$ that satisfy the split conditions \hyperlink{SC1}{\bf SC1-2-3} before the beginning of the reduction. 
	In other words, they are all and only the pairs that always exit the loop in \hyperlink{PAIR}{\texttt{PAIR}} at the first iteration.
    Note that the generators in an apparent pair depend on the chosen total order: there can be more than one pair of generators with the same degree and entrance time, and the order between them decides which one belongs to the apparent pair. 
	The interval decomposition is, clearly, independent from this choice.
	
	More generally, an \define{emergent (co)facet pair} is a pair of generators $(x_i,x_j)$ that get split and such that $x_i$ is the youngest facet of $x_j$ (resp. $x_j$ is the oldest cofacet of $x_i$). 
	Thus, emergent pairs are pairs of generators that immediately exit the loop in \hyperlink{PAIR}{\texttt{PAIR}}, when \hyperlink{PAIR}{\texttt{PAIR}} is called after at least one iteration.
\end{point}

\section{Discussion and conclusions}\label{sec_conclusions}

In this work, we analysed persistence algorithms based on matrix reductions from the standpoint of filtered chain complexes. 
Using this approach, we have been able to geometrically explain the decomposition as well as the clear and compress optimisations.
Nevertheless, several questions remain open.
For example, a more efficient implementation of the \hyperlink{PAIR}{\texttt{PAIR}} method, ideally tailored for different chain complexes, needs to be studied to see if it can provide some speed up in computation. 
One could also ask if, due to the independence of the order, there is a more efficient way to sort the generators to allow quick access in the memory or a faster way to calculate the (co)boundary. 

We also introduced filtered kernels and provided a setting in which they can be decomposed with our algorithm. 
This setting can be realised in different situations. 
For example, suppose we have a distance space $M$ that we want to enlarge by glueing non-empty fibers to its points. 
The resulting distance space $M'$ is called an extension of $M$
if the projection $f\colon M' \to M$ is $1$-Lipschitz. Any such extension leads to a split short exact sequence $0\to \ker\to X \to Y \to 0$, where $X$ and $Y$ are the filtered chain complexes built from filtrations on $M'$ and $M$, respectively. Thus,  in such a setting, the decomposition of $X$ can be obtained from the decompositions of $Y$ and $\ker $, where the latter may not come, in any natural way, from a filtered simplicial complex. 
Decompositions of filtered chain complexes of  all extensions of 
$M$ can be analysed in this way, without the need to recomputing the decomposition of $Y$, possibly leading to more effective calculations.

Yet another aspect to be investigated is the information carried by the points on the diagonal. 
There is some empirical evidence that these points are not always noise. 
In the example in \cref{S:example}, we see that interval modules do not distinguish between the two metric spaces, whereas interval spheres do. 
This indicates that filtered chain complexes retain more geometric information that would be lost applying homology. 
We also have that, in the case of a Vietoris--Rips complex, the latest-appearing interval sphere in degree $1$ has an interval of length zero, and the value of the starting point of the interval is the diameter of the point cloud.
For another example, the authors of \cite{diffraction} noted that the presence and multiplicity of points on and very near the diagonal of persistence diagrams obtained from studying the diffraction of different materials are related to the density of the materials (see Fig.\ 11 (A) and (B) in \cite{diffraction}). 
Moreover, in \cite{fasy2022faithful}, the zero-length intervals are necessary to ensure the proper reconstruction of the simplicial complex.
These different examples suggest that the zero-length intervals (which are the points on the diagonal of a persistence diagram) carry some meaningful information. 
We believe that they should be investigated in a more structured study, and we hope that our algorithm will be helpful to this end.

\paragraph{Acknowledgment.}
W.\ Chach\'olski was partially supported by VR, the Wallenberg AI, Autonomous System and Software Program (WASP) funded by Knut and Alice Wallenberg Foundation, and MultipleMS funded by the European Union under the Horizon 2020 program, grant agreement 733161, and dBRAIN collaborative project at digital futures at KTH.
This work was partially carried out by B. Giunti and C. Landi within the activities of ARCES (University of Bologna) and under the auspices of INdAM. 
B. Giunti benefited from the hospitality by the mathematics department of NTNU Trondheim and was partially supported by the Austrian Science Fund (FWF) grants number P29984-N35 and P33765-N. 
We thank Guillaume Houry, Michael Kerber, and Sara Scaramuccia for valuable discussions.
We also thank the referees for their feedback.

\small

\end{document}